\documentclass[a4paper,12pt]{article}
\usepackage{amsmath}
\usepackage{amsfonts}
\usepackage{amssymb}
\usepackage{amsthm}
\usepackage{graphicx}
\usepackage{ifthen}
\usepackage{verbatim}
\usepackage{amsmath}
\usepackage{setspace}
\usepackage{amsfonts}
\usepackage{amssymb}
\usepackage{amsthm}
\usepackage{lscape}
\usepackage{setspace}
\usepackage{listings}
\usepackage{amsmath}
\usepackage{amsfonts}
\usepackage{amssymb}
\usepackage{amsthm}
\usepackage{ifthen}
\usepackage{verbatim}
\newtheorem{thm}{Theorem}[section]

\newtheorem{coro}[thm]{Corollary}
\newtheorem{lem}[thm]{Lemma}
\newtheorem{defn}[thm]{Definition}
\newtheorem{rem}[thm]{Remark}
\newtheorem{prop}[thm]{Proposition}

\def\1{\emph{\textbf{1}}}
\def\R{\mathbb{R}}
\def\P{\mathbb{P}}

\def\E{\mathbb{E}}
\def\N{\mathbb{N}}

\def\Z{\mathbb{Z}}
\def\T{\mathcal{T}}

\def\F{\mathcal{F}}

\begin{document}

\title{From Triangular Array Progression\\
to Near-Log-Concave State Transitions}
\author{Levent Ali Meng\"ut\"urk\thanks{University College London, Department of Mathematics, ucaheng@ucl.ac.uk, and Artificial Intelligence and Mathematics Research Lab, levent@aimresearchlab.com} \\ \small University College London and AIM Research Lab
}
\date{}
\maketitle
\begin{abstract}
The paper introduces near-log-concave random variables and the state-transitions of a family of Markov chains derived from an algorithmic integer sequence progression. We propose a combinatorial rule that generates an asymmetric triangular array whose $n$-th row sums to $2^{n}$ for all $n \geq 0$, where every row is the degree sequence of a multigraph without loops. The structure hosts infinitely many zero-free unimodal near-log-concave sequences whose log-concavity deviation converges to $\log(4/3)$ under logarithmic scaling. From a probabilistic perspective, the construct defines a sequence of near-log-concave probability mass functions, from which, a structural decoupling of moments emerges: the mean diverges while the variance asymptotically converges to a steady-state limit. We study the Shannon entropy dynamics of the triangular array, and generate infinitely many right-stochastic matrices via coordinate transformations over the Tychonoff cube. Our specific framework indicates a broader methodology for modelling asymmetric stochastic systems, where distributions propagate as non-dispersive discrete wave packets, advancing indefinitely without flattening.
\end{abstract}
{\bf Keywords:} Near-log-concave distributions, decoupled moments, non-dispersive propagation

\section{Introduction}
The main direction of this paper is to introduce and study an infinite triangular array generated through a particular sequence progression algorithm. We keep this paper compact and computationally tractable to provide a list of results that shed light on how the proposed construct behaves. As a brief background to its emergence, the author uncovered the triangular array sequence while working on a broader probability problem involving systems that satisfy the following set of state-transition characteristics:
\begin{enumerate}
\item From any state, the system has the highest probability to remain in that state in the next time step when compared to any other one state
\item From any state, transition probabilities are monotonically non-increasing from the closest previously-visited state to progressively distant previously-visited states in the next time step
\item From any state, the system can transition \emph{only} to the closest unvisited state in the next time step
\end{enumerate}
As an overview, such a stochastic system has the highest likelihood to stay where it currently is when compared to transitioning to another state. If this system transitions to another previously achieved state, the transitioned state is more likely to neighbor the current state than to being further away from it. Finally, the system visits only the closest new state and no other previously unexplored state is achieved in one time step. Such dynamical systems, either partially or fully, may be observed in atomic phase transitions, social queuing, cellular cycles and gene expressions -- see \cite{0app,3app,4app,5app,5app1,6app,6app1,7app} and references therein. For example, in quantum mechanics and atomic physics, such patterns may arise in systems wherein electrons jumping to lower energy states (i.e. more ordered states) is more probable than leaping to a higher energy state (i.e. a more disordered state); e.g. quantum random walks may exhibit similar left-skewed, asymmetric behaviour (see \cite{3q,7q}). While this work does not claim that the proposed structure is fit to model these specific empirical fields, we shall provide a method to transform the given triangular array, which we use as an \emph{anchor} integer structure, into infinitely many transition matrices that satisfy all the three aforementioned characteristics. To the best of our knowledge, the construct presented in this paper does not exist in the literature.

We show that the introduced anchor triangular array hosts infinitely many unimodal \emph{near}-log-concave sequences with no zeros. In doing so, we first prove that there exists an infinite \emph{sub}-triangular array, -- each row a strictly-positive log-concave sequence (i.e. a Pólya frequency sequence of order 2) -- that almost entirely dominates the full triangular array, in which log-concavity fails only at a \emph{single} early column. At this stage, we shall highlight that log-concave sequences arise in numerous areas of mathematics including algebraic graph theory, combinatorics and geometry -- see \cite{0aa,1a,2,3,4,5,6,7,8,9,10,11,12}, amongst many other important works. Next, we prove that every row of the triangular array sums to an increasing power of 2, where the power is determined in terms of the cardinality of that row; more precisely, the $n$th row of the array sums to $2^n$ for every $n\geq0$. We show that every row of the array defines the degree sequence of a multigraph without loops; see \cite{graphone}. Although the first four rows of the proposed triangular progression coincide with those from Pascal's triangle, its elements become asymmetric thereafter and diverge from binomial coefficients. Accordingly, when viewed as polynomial coefficients, the proposed triangular array produces a family of polynomials that diverge from $(x+1)^n$ for $n \geq 4$. We show that the divergence polynomial must have a root at 1 for every $n\geq 0$. In addition, we define a \emph{log-concavity deviation} measure in order to quantify how far a \emph{near}-log-concave sequence is to being log-concave. We prove that the log-concavity deviation of the triangular progression converges to $\log(4/3)$ under logarithmic scaling. 

Due to the positivity of every element in the triangular progression, we reach infinitely many probability mass functions by normalising the triangular array rows using powers of $1/2$. Accordingly, the construct forms a lower-triangular stochastic matrix of a Markov chain with infinite states. Moreover, we show that a Markov chain governing this stochastic matrix satisfies the three aforementioned state-transition characteristics. A central property of interest in the study of infinite state transitions is the long-term behavior of statistical moments. While expanding arrays typically exhibit unbounded dispersion as the system dimension grows (e.g. symmetric binomial distribution induced by Pascal's triangle), the progression introduced here demonstrates a structural decoupling of a divergent mean and a convergent variance. Accordingly, this work explores how a simple, combinatorial rule can bound variance toward a steady-state limit even as the underlying support expands indefinitely. In this spirit, one intriguing feature of the triangular array is that the probability distribution progressively mirrors a \emph{wave} organizing itself inside a discrete channel as its mean advances (i.e., slides) with increasing $n$. As the variance converges while the relative peak profile remains structurally pinned, the distribution acts as a non-dispersive wave pulse propagating forward through its corridor without flattening out, instead dynamically converging into a stable shape in the form of a limiting probability mass function. This observation also raises the possibility that the three state-transition characteristics that motivated the construction may be related more generally to non-dispersive propagation in discrete stochastic systems; a direction we leave for future investigation.

We also prove that the normalised array hosts an infinite Toeplitz matrix that is totally non-negative of order 2 with each row log-concave. Since every probability mass function extracted from the structure is \emph{near}-log-concave, we define what we call \emph{near}-log-concave random variables and their induced probability measures. Here, we highlight that log-concavity arises abundantly in probability and statistics; e.g. Gaussian, Bernoulli, Poisson, exponential and geometric distributions are log-concave distributions -- see \cite{0b,0aaa,1,6a,6aa,9aaa,13} and references therein. Next, we study the Shannon entropy dynamics -- see \cite{0,0a,2a,5a,7a} -- of the normalised triangular array progression to quantify the evolution of information from one row to the next. We provide a numerical convergence result for the entropy sequence. 

For purposes strictly surrounding the aforementioned state-transition characteristics, having \emph{near}-log-concavity is not a necessary condition, but it leads us to prove unimodality, which is necessary (but not sufficient). More precisely, a sequence $[\eta_1,\ldots,\eta_{n+1}]$ for $n\geq 1$ is unimodal if there exists a $k^* \leq n+1$ such that the following ordering of its elements holds:
\begin{align}
\eta_1 \leq \eta_2 \ldots \leq \eta_{k^*} \,\,\,\,\, \text{and} \,\,\,\,\, \eta_{k^*} \geq \ldots \geq \eta_{n+1}. \label{unimodality}
\end{align}
If each row of a stochastic matrix satisfies (\ref{unimodality}) at $k^* = n$ for any $n\geq 1$, then the first two state-transition characteristics are satisfied -- this will be clear later in the paper, where we show that each row of the normalised array progression satisfies this property.

The purpose of this paper is exploratory: we investigate certain mathematical features of a particular construction and the structures it naturally induces. The results below should therefore be viewed as a collection of properties and consequences of the construct, with the state-transition interpretation providing the original motivation.

\section{Triangular Array Progression}
Let $\T_{n,j}$ be the $n$th row and the $j$th column of the triangular array $\T$ for every $n\geq 0$ and $1 \leq j \leq n+1$, given that $n,j \in \Z_+$. Our focus on \emph{triangular} constructs stems from the third state-transition property mentioned in the previous section -- \emph{from any state, the system can transition only to the closest unvisited state in the next time step} -- for which, a lower-triangular structure naturally embeds this characteristic when normalised into a stochastic-matrix, and when the upper-triangular cut is assumed to be all zeros. In order to capture the first and second state-transition properties given in the previous section, we ask additional properties from the infinite triangular array $\T$. For what follows, let $M^*\in \Z_+$ with $0 < M^* < \infty$ be such that all rows $n \leq M^*$ of $\T$ are initialized and there exists no other initialization of $\T$ for any row $n > M^*$. Hereafter, every initialized row of $\T$ should be understood to satisfy all the three state-transition properties given in the previous section under a given normalization. Accordingly, for every row $n > M^*$, we shall first propose the following general array progression algorithm, which will later produce the special tringular array we will focus on.
\\
\\
\textbf{General Construct of $\T$ after $M^*$:}
\begin{enumerate}
\label{algorithmgeneral}
\item $\T_{n,n+1} = F(\T_{k,k+1} \, : \, 0 \leq k \leq n - 1 )$ for every $n > M^*$
\item $\T_{n,n}   = G(\T_{k,k} \, : \, 0 \leq k \leq n - 1 )$ for every $n > M^*$
\item $\T_{n,j}   = H^{(j)}(\T_{k,j} \, : \, 0 \leq k \leq n - 1 )$ for every $1 \leq j \leq n-1$ and $n > M^*$ 
\end{enumerate}
given that $F,G$ and $H^{(j)}$ for $j\geq 1$ are $\R_+$-valued functionals on sequences that satisfy 
\begin{enumerate}
\label{functionalrelations}
\item $G(\T_{k,k} \, : \, 0 \leq k \leq n - 1 ) \geq F(\T_{k,k+1} \, : \, 0 \leq k \leq n - 1 )$ for every $n > M^*$
\item $G(\T_{k,k} \, : \, 0 \leq k \leq n - 1 ) \geq  H^{(n-1)}(\T_{k,n-1} \, : \, 0 \leq k \leq n - 1 )$ for every $n > M^*$
\item $H^{(j)}(\T_{k,j} \, : \, 0 \leq k \leq n - 1 ) \geq H^{(j-1)}(\T_{k,j} \, : \, 0 \leq k \leq n - 1 )$ for every $2 \leq j \leq n-1$ and $n > M^*$ 
\end{enumerate}
Although the construct of $\T$ given above is fairly abstract for specific applications, it provides a mathematical structure that encapsulates all the three state-transition properties we aim to capture. More precisely, the functional relations above produce unimodal sequences where the mode manifests at the $n$th column of the $n$th row until which the sequence is monotonically non-decreasing. In addition, the $(n+1)$th column is at most as high as the $n$th column, and the triangular structure confines any view beyond column $n+1$. For the rest of this paper, we shall introduce and focus on an explicit construct of $\T$ that can be viewed as a particular example of the progression given above. Accordingly, $\T$ that forms our anchor integer structure is produced through the following special sequence progression algorithm:
\begin{enumerate}
\label{algorithm1}
\item $\T_{n,n+1} = 1$ for every $n \geq 0$
\item $\T_{2,2}$ = 2 and $\T_{3,3} = 3$
\item $\T_{n,n} = \sum_{k=1}^{n-2} \T_{n-k,n-k}$ for every $n \geq 4$
\item $\T_{n,j} = \T_{n-1,j} + \psi(j)$ for every $n \geq 1$ and $1 \leq j \leq n-1$
\end{enumerate}
given that $\psi: \Z_+ \rightarrow \N_0$, where 
$\psi(1) = 0$, $\psi(2) = 1$, $\psi(3) = 2$ and
\begin{align}
&\psi(j) = \sum_{k=1}^{j-1} \psi(k) \hspace{0.1in} \text{for $j \geq 4$}. \label{columnprogressionfunc}
\end{align}
\begin{rem}
The explicit construct of $\T$ above is an example of the general construct when one sets $M^* = 3$ and initialize every row $n \leq 3$ in agreement with the explicit algorithm.
\end{rem}
Using the recurrence relation, we see that the entire array is given by the following representation:
\begin{equation}
\label{algorithmentire}
\T_{n,j} =
\begin{cases}
1 & j = 1 \, \text{or} \, j = n+1 \,\, \text{for} \,\, n\geq 0,\\
n & j = 2 \,\, \text{for} \,\, n\geq 1, \\
2n - 3 & j = 3 \,\, \text{for} \,\, n\geq 2,\\
\left(5 + 3(n-j)\right)2^{j-4} & 4 \leq j \leq n. \\
\end{cases}
\end{equation}
Using the initial values $\psi(1) = 0$, $\psi(2) = 1$, $\psi(3) = 2$ and the initial values $\T_{2,2}$ = 2 and $\T_{3,3} = 3$, we have the following boundary relation, which we shall use later on:
\begin{align}
\T_{n-1,j} + \psi(j) < 2\T_{n-1,j} \,\, \Rightarrow \,\, \frac{\T_{n,j}}{\T_{n-1,j}} < 2, \label{ratiorows}
\end{align}
for every $n \geq 1$ and $1 \leq j \leq n-1$, since $x + c < 2x$ for any $c < x$ for $x,c \in \R_+$. Hereafter, we denote $\T_{n,*}$ as the full sequence of numbers at the $nth$ row of $\T$, given by
\begin{align}
\T_{n,*} \triangleq [\T_{n,1}\,,\, \ldots \,,\, \T_{n,n+1}], \notag
\end{align}
for every $n\geq 0$, where $[\, . \,]$ is the notation we use for any sequence -- in this paper, any $[\, . \,]$ is composed of $\R$-valued elements and any infinite sequence should be understood asymptotically. Note that $\T_{n,*}$ for $n\in\{0,1,2,3\}$ are symmetric sequences and coincide with the first four rows of Pascal's triangle. Accordingly, if we define a family of polynomials by
\begin{align}
\label{triangularpolynomial}
p^{(n)}_{\T}(x) = \sum_{j=1}^{n+1} \T_{n,j} x^{j-1} \hspace{0.1in} \forall x \in \R, 
\end{align}
for $n\geq 0$, then $p^{(n)}_{\T}(x) = (x+1)^n$ for $n\in\{0,1,2,3\}$. Thereafter, $\T_{n,*}$ for every $n\geq 4$ is an asymmetric integer sequence and diverges from binomial coefficients -- we shall prove a result regarding this divergence later in the paper.
\begin{lem}
\label{mainlemmaone}
Define an infinite sub-triangular array $\tau$ by
\begin{align}
\tau_{n,j} \triangleq \T_{n + 3, j + 3} \notag
\end{align}
for every $n \geq 0$ and $1 \leq j \leq n +1$. Then, $\tau_{n,*}$ is a strictly-positive log-concave sequence for every $n\geq 2$.
\end{lem}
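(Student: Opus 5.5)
The plan is to reduce everything to the closed-form representation (\ref{algorithmentire}) and then verify the defining inequality $\tau_{n,j}^2 \geq \tau_{n,j-1}\tau_{n,j+1}$ for every interior index $2 \leq j \leq n$ by direct algebra. First I translate the indices. Since $\tau_{n,j} = \T_{n+3,j+3}$, the column index $j+3$ lies in $\{4,\ldots,n+3\}$ exactly when $1 \leq j \leq n$, and $j = n+1$ corresponds to the last column $\T_{n+3,n+4}=1$. So (\ref{algorithmentire}) gives
\begin{align}
\tau_{n,j} = \left(5 + 3(n-j)\right)2^{j-1} \quad (1 \leq j \leq n), \qquad \tau_{n,n+1} = 1. \notag
\end{align}
Strict positivity is then immediate, because $5+3(n-j) \geq 5$ and $2^{j-1}\geq 1$. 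It is worth noting that the shift by three columns removes exactly the columns $j\in\{1,2,3\}$ of $\T$, where the formula has a different shape.

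Next I treat the interior indices $2 \leq j \leq n-1$, where all three terms $\tau_{n,j-1},\tau_{n,j},\tau_{n,j+1}$ come from the geometric-times-linear formula. Put $m = n-j \geq 1$. The powers of two contribute $4^{j-1}$ on both sides, because $2^{j-2}\cdot 2^{j} = 2^{2(j-1)}$. The inequality therefore reduces to
\begin{align}
(5+3m)^2 \geq (8+3m)(2+3m), \notag
\end{align}
that is, $25 + 30m + 9m^2 \geq 16 + 30m + 9m^2$. This holds with a margin of $9$, uniformly in $m$. In fact it holds for every real $m$, since a linear sequence is always log-concave and multiplying by a geometric factor preserves this.

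The step that needs separate care is the boundary index $j = n$, where the third term is the isolated value $\tau_{n,n+1}=1$ and not a value of the closed formula. Here $\tau_{n,n} = 5\cdot 2^{n-1}$ and $\tau_{n,n-1} = 8\cdot 2^{n-2} = 2^{n+1}$. So I need $25\cdot 4^{n-1} \geq 2^{n+1}$, i.e.\ $25\cdot 2^{n-3} \geq 1$, which holds for all $n \geq 1$. The hypothesis $n \geq 2$ guarantees that $j=n$ is a genuine interior index with $j-1\geq 1$, so the first column is never involved. For $n\in\{0,1\}$ the sequences $[1]$ and $[5,1]$ have no interior index, so the condition would be vacuous there.

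Combining the two cases gives log-concavity of every row $\tau_{n,*}$ for $n\geq 2$. Since all entries are positive, the sequence also has no internal zeros, so it is a strictly positive log-concave (PF$_2$) sequence as claimed. I expect no real obstacle: the only delicate point is bookkeeping, namely identifying the boundary term correctly and checking that it is handled separately.
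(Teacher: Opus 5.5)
Your proof is correct, but it takes a different route from the paper. You read everything off the closed form (\ref{algorithmentire}), so that $\tau_{n,j}=(5+3(n-j))2^{j-1}$ for $j\le n$. You then check each triple directly: for $2\le j\le n-1$ you get the exact identity $\tau_{n,j}^2-\tau_{n,j-1}\tau_{n,j+1}=9\cdot 4^{j-1}$, and you handle the boundary index $j=n$, where the isolated entry $\tau_{n,n+1}=1$ enters, separately.

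The paper never uses the closed form in this proof. It derives $\psi(j)=2\psi(j-1)$ for $j\ge 5$ and $\T_{n,n}=2\T_{n-1,n-1}$ for $n\ge 5$, and from these the diagonal self-similarity $\tau_{n,j}=\alpha_j(n)\tau_{n-1,j-1}$, with $\alpha_j(n)=2$ for $2\le j\le n$ and $\alpha_{n+1}(n)=1$. It uses this to shift every triple down a diagonal until the triple starts in the first column. There, log-concavity comes from the linear column profiles $f_1,f_2,f_3$ with $f_2^2-f_1f_3=36$, which is exactly your constant $9\cdot 4^{j-1}$ at $j=2$, together with the hand-checked rows $\tau_{2,*}$ and $\tau_{3,*}$.

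Your route is more explicit and quantitative. It gives the exact log-concavity gap, makes the ``linear times geometric'' mechanism transparent, and treats the boundary index cleanly. In the paper, that case is only implicit in the factor $\alpha_{n+1}(n)=1$, which propagates the inequality with an extra factor of $2$ of slack.

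The paper's route, in exchange, works directly from the recursive algorithm rather than from (\ref{algorithmentire}), which the paper asserts without a detailed derivation. It also isolates the diagonal relation that is reused later for the row sums, the Toeplitz structure and the product asymptotics. To make your argument fully self-contained, note that (\ref{algorithmentire}) for $j\ge 4$ follows from $\psi(j)=3\cdot 2^{j-4}$ and $\T_{j,j}=5\cdot 2^{j-4}$ by summing the column increments from the diagonal. These are the same two facts the paper establishes inside its proof.
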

\begin{proof}
To begin with, every element in $\T$ is strictly-positive, hence, every element in $\tau$ is strictly-positive. Compute the first four sequences of $\tau$, which yield 
\begin{align}
\tau_{0,*} = [1] \hspace{0.1in} \text{and} \hspace{0.1in} \tau_{1,*} = [5,1] \hspace{0.1in} \text{and} \hspace{0.1in} \tau_{2,*} = [8,10,1] \hspace{0.1in} \text{and} \hspace{0.1in} \tau_{3,*} = [11,16,20,1], \label{initialtauseq} 
\end{align}
for which log-concavity is not relevant for $\tau_{0,*}$ nor $\tau_{1,*}$, whereby $\tau_{2,*}$ and $\tau_{3,*}$ are log-concave sequences, since they satisfy 
\begin{align}
\tau_{2,1}\tau_{2,3} \leq \tau_{2,2}^2 \hspace{0.1in} \text{and} \hspace{0.1in} \tau_{3,1}\tau_{3,3} \leq \tau_{3,2}^2 \hspace{0.1in} \text{and} \hspace{0.1in} \tau_{3,2}\tau_{3,4} \leq \tau_{3,3}^2, \label{initiallogconcaveprop} 
\end{align}
respectively. Define the following continuous functions: 
\begin{align}
f_1(x) = 11 + 3x, \,\,\,\, f_2(x) = 16 + 6x, \,\,\,\, f_3(x) = 20 + 12x, \notag 
\end{align}
for any $x\in\R_+$. Since $\T_{n,j} = \T_{n-1,j} + \psi(j)$ for every $n \geq 1$ and $1 \leq j \leq n-1$, the first three columns of $\tau$ define discrete points on $f_1$, $f_2$ and $f_3$, respectively. Note also that we have the positivity:
\begin{align}
f_2(x)^2 - f_1(x)f_3(x) = 36, \notag 
\end{align}
for any $x\in\R_+$. This implies that any sequence formed from the first three columns of $\tau$ satisfy log-concavity:
\begin{align}
\label{funcapprox}
\tau_{n,1}\tau_{n,3} \leq \tau_{n,2}^2,
\end{align}
for every $n\geq 2$. Next, from (\ref{columnprogressionfunc}), we can write
\begin{equation}
\label{columnprogressiontwo}
\psi(j) =
\begin{cases}
j -1 & \text{for $1 \leq j \leq 4$},\\
2\psi(j-1) &  \text{for $j \geq 5$}. \\
\end{cases}
\end{equation}
In addition, since $\T_{2,2}$ = 2, $\T_{3,3} = 3$ and $\T_{n,n} = \sum_{k=1}^{n-2} \T_{n-k,n-k}$ for every $n \geq 4$, we also have the following diagonal relation:
\begin{align}
\T_{n,n} = 2\T_{n-1,n-1} \label{remakeofdiagonals}
\end{align}
for every $ n \geq 5$. Therefore, using (\ref{initialtauseq}), (\ref{columnprogressiontwo}), (\ref{remakeofdiagonals}) and $\T_{n,j} = \T_{n-1,j} + \psi(j)$ for every $n \geq 1$ and $1 \leq j \leq n-1$, we must have
\begin{align}
\tau_{n,j} = 2 \tau_{n-1,j-1}, \label{maindiagonalrelationfirsteq}
\end{align}
for every $n \geq 2$ and $2 \leq j \leq n$. Moreover, since we have $\T_{n,n+1} = 1$ for every $n \geq 0$, we also have $\tau_{n,n+1} = 1$, which further means $\tau_{n,n+1} = \tau_{n-1,n}$ for every $n \geq 1$. Accordingly, we can write
\begin{align}
\label{maindiagonalrelation}
\tau_{n,j} = \alpha_j(n) \tau_{n-1,j-1}
\end{align}
for every $n \geq 2$ and $2 \leq j \leq n+1$, where we defined the coefficient
\begin{equation}
\alpha_j(n) =
\begin{cases}
1 & \text{for $j = n+1$},\\
2 &  \text{for $2 \leq j \leq n$}. \notag
\end{cases}
\end{equation}
Thus, using (\ref{funcapprox}) and (\ref{maindiagonalrelation}), we must have 
\begin{align}
\tau_{n,j-1}\tau_{n,j+1} \leq \tau_{n,j}^2, \notag 
\end{align}
for every $n\geq 2$ and $2 \leq j \leq n$. Therefore, the sequence $\tau_{n,*}$ is a strictly-positive log-concave sequence for every $n\geq 2$.
\end{proof}
\begin{coro}
\label{unimodalitycorollaryres}
Since each $\tau_{n,*}$ is a strictly-positive log-concave sequence for every $n\geq 2$, each $\tau_{n,*}$ is a unimodal sequence with no zeros for every $n\geq 2$. In addition, $\sup(\tau_{n,*}) = \tau_{n,n}$ for every $n\geq 1$.
\end{coro}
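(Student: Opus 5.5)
The plan is to prove the two claims separately. For unimodality I will use the standard ratio argument for positive log-concave sequences, taking Lemma \ref{mainlemmaone} as given. For the location of the maximum I will compute directly from the closed form (\ref{algorithmentire}). The direct computation also covers $n=1$, where Lemma \ref{mainlemmaone} gives no information.

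\emph{Unimodality with no zeros.} Fix $n\geq 2$. By Lemma \ref{mainlemmaone} every $\tau_{n,j}$ is strictly positive, so $\tau_{n,*}$ has no zeros, and all the ratios $r_j \triangleq \tau_{n,j+1}/\tau_{n,j}$ for $1\le j\le n$ are well defined and positive. Dividing $\tau_{n,j-1}\tau_{n,j+1}\le \tau_{n,j}^2$ by $\tau_{n,j-1}\tau_{n,j}>0$ gives $r_j\le r_{j-1}$, so the ratios are non-increasing in $j$. Let $k^*$ be the first index $j$ with $r_j<1$, or $k^*=n+1$ if there is none. Monotonicity of the ratios then gives $r_j\ge 1$ for $j<k^*$ and $r_j<1$ for $j\ge k^*$. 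Hence the sequence is non-decreasing up to $k^*$ and non-increasing after it, which is exactly (\ref{unimodality}).

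\emph{Location of the maximum.} Here I identify the mode explicitly. For $n\ge 1$ and $1\le j\le n$ we have $4\le j+3\le n+3$, so (\ref{algorithmentire}) gives
\begin{align}
\tau_{n,j} = \T_{n+3,j+3} = \left(5+3(n-j)\right)2^{j-1}, \qquad \tau_{n,n+1}=1. \notag
\end{align}
For $1\le j\le n-1$, set $k=n-j\ge 1$. Then
\begin{align}
\frac{\tau_{n,j+1}}{\tau_{n,j}} = \frac{2\left(2+3k\right)}{5+3k} \ge \frac{10}{8} > 1, \notag
\end{align}
because $4+6k>5+3k$ exactly when $k>1/3$. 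So the first $n$ entries are strictly increasing. At the other end, $\tau_{n,n}=5\cdot 2^{n-1}\ge 5>1=\tau_{n,n+1}$. Therefore $\sup(\tau_{n,*})=\tau_{n,n}$ for every $n\ge1$. For $n=1$ this reads $\tau_{1,*}=[5,1]$ with supremum $\tau_{1,1}=5$, in agreement with (\ref{initialtauseq}). The computation also shows that the mode $k^*$ in the unimodality argument equals $n$.

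The only delicate step is the ratio bound at the smallest gap $k=1$, i.e.\ $j=n-1$. There the ratio is closest to $1$, and the inequality has to be checked rather than inferred from log-concavity. Once that case is settled, the rest is a direct reading of the closed form.
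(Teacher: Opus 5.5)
Your proof is correct, and it is more explicit than the paper, which gives no separate argument. The paper reads unimodality off the standard fact that a zero-free log-concave sequence is unimodal; your ratio argument simply spells this out. The location of the maximum is left implicit in the paper.

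The route available from the paper's own tools is recursive rather than closed-form. Combine the doubling relation $\tau_{n,j}=2\tau_{n-1,j-1}$ from (\ref{maindiagonalrelationfirsteq}) with the row-growth bound (\ref{ratiorows}). This gives $\tau_{n,j-1}<2\tau_{n-1,j-1}=\tau_{n,j}$ for $n\ge 2$ and $2\le j\le n$. The step $\tau_{n,n+1}=1<\tau_{n,n}$ finishes it, with $n=1$ read from (\ref{initialtauseq}). This is essentially how the paper later justifies $\sup(\T_{n,*})=\T_{n,n}$.

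Your closed-form computation instead gives an explicit bound on the ratios and handles $n=1$ uniformly. It also makes the appeal to Lemma \ref{mainlemmaone} unnecessary. Strict increase up to $j=n$ followed by a drop already exhibits unimodality with $k^*=n$ for every $n\ge 1$.

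One small point: the bound $\ge 10/8$ tacitly uses that $k\mapsto (4+6k)/(5+3k)$ is increasing, which you did not state. Your $k>1/3$ argument already gives the needed strict inequality $>1$ independently, so nothing is lost.
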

We highlight Corollary \ref{unimodalitycorollaryres} since the expressed unimodality is necessary for the required state-transition characteristics as discussed previously. Using the arguments in the proof of Lemma \ref{mainlemmaone}, we can re-write the algorithm by including $\T_{4,4} = 5$ in step 2, and modifying step 3 as $\T_{n,n} = 2\T_{n-1,n-1}$ for every $n \geq 5$; the advantage of doing this is computational efficiency, if one is to code the structure. 
We shall provide an additional property of the sub-triangular array $\tau$.
\begin{prop}
For any $n\geq 3$, $\tau_{n,n} = \tau_{n+3,n - 2}$.
\end{prop}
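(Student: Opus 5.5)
The plan is to translate both sides of the identity back into entries of $\T$ and then evaluate them with the closed-form representation (\ref{algorithmentire}). By the definition of $\tau$ in Lemma \ref{mainlemmaone}, the left-hand side is
\begin{align}
\tau_{n,n} = \T_{n+3,n+3}. \notag
\end{align}
For the right-hand side, $\tau_{n+3,n-2}$ is a legitimate entry because $1 \leq n-2 \leq n+4$ once $n \geq 3$. By definition it equals
\begin{align}
\tau_{n+3,n-2} = \T_{n+6,n+1}. \notag
\end{align}
So the statement reduces to checking $\T_{n+3,n+3} = \T_{n+6,n+1}$ for all $n\geq 3$.

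The next step is to check which case of (\ref{algorithmentire}) applies to each entry.

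For $\T_{n+3,n+3}$ the row index is $m=n+3$ and the column index is $j=m$. Since $m \geq 6 \geq 4$ and $j \leq m$, this is the fourth case. With $m-j=0$ it gives $5\cdot 2^{(n+3)-4} = 5\cdot 2^{n-1}$.

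For $\T_{n+6,n+1}$ the row index is $m=n+6$ and the column index is $j=n+1$. The hypothesis $n\geq 3$ gives $j\geq 4$, and clearly $j \leq m$, so the fourth case applies again. Here $m-j=5$, so it gives $(5+15)\,2^{(n+1)-4} = 20\cdot 2^{n-3} = 5\cdot 2^{n-1}$.

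The two values coincide, which would finish the proof.

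The only real care point is the bookkeeping of the index ranges. This is exactly where the threshold $n\geq 3$ enters: it forces the column $n+1$ into the generic regime $j\geq 4$. For $n\leq 2$ the column $n+1$ would fall in the special cases $j\in\{2,3\}$, and the identity need not hold.

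As an alternative that avoids relying on (\ref{algorithmentire}), one could argue directly from the recurrences used in the proof of Lemma \ref{mainlemmaone}. The diagonal satisfies $\T_{m,m}=5\cdot 2^{m-4}$ via (\ref{remakeofdiagonals}) together with $\T_{4,4}=5$. The column progression gives $\T_{n+6,n+1} = \T_{n+1,n+1} + 5\,\psi(n+1)$. Using $\psi(j)=3\cdot 2^{j-4}$ for $j\geq 4$, which follows from (\ref{columnprogressiontwo}), this becomes $5\cdot 2^{n-3}+15\cdot 2^{n-3} = 5\cdot 2^{n-1}$, confirming the same value.
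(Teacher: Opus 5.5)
Your proof is correct, but it takes a different route from the paper. You rewrite both sides as entries of $\T$, namely $\T_{n+3,n+3}$ and $\T_{n+6,n+1}$, and evaluate them with the closed form (\ref{algorithmentire}). In your alternative you instead use $\T_{m,m}=5\cdot 2^{m-4}$ and $\psi(j)=3\cdot 2^{j-4}$ together with five column steps. Either way you get the common value $5\cdot 2^{n-1}$.

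The paper argues structurally. It checks the single base case $\tau_{3,3}=\tau_{6,1}$ (both equal $\T_{6,6}=\T_{9,4}=20$). It then propagates this with the diagonal relation (\ref{maindiagonalrelation}): $\tau_{n,n}=2\tau_{n-1,n-1}$ always holds, and $\tau_{n+3,n-2}=2\tau_{n+2,n-3}$ holds once $n-2\geq 2$. So both sides double as $n$ increases, which is an implicit induction.

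What your computation buys:
\begin{itemize}
\item It exhibits the actual value of both sides.
\item It makes transparent that $n\geq 3$ is exactly the condition that places column $n+1$ in the generic regime $j\geq 4$. This coincides with the well-definedness condition $n-2\geq 1$.
\item Your alternative derivation is self-contained from the recurrences. It does not lean on (\ref{algorithmentire}), which the paper states without a detailed derivation.
\end{itemize}

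What the paper's argument buys: it explains the identity as a consequence of the shift-invariance of $\tau$ along its diagonals (its Toeplitz-like structure away from the final column of ones), rather than as a numerical coincidence. One verified equality then yields the whole infinite family with no further computation.
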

\begin{proof}
As an initial condition, $\tau_{3,3} = \tau_{6,1}$ holds, and the statement follows from (\ref{maindiagonalrelation}) for every $n\geq 3$.
\end{proof}
For what follows, we shall define what we call \emph{near}-log-concave sequences of degree $r\in \Z_+$. Hereafter, we shall use $|\mathcal{I}|$ for the cardinality of a set $\mathcal{I}$.
\begin{defn}
\label{nearlogconcave}
A sequence $[\eta_1,\ldots,\eta_{n+1}]$ for $n\geq 2$ is a \emph{near}-log-concave sequence of degree $r\in \Z_+$ for finite $0 \leq r \leq n-2$, if the following holds:
\begin{align}
\eta_{i-1}\eta_{i+1} \leq \eta^2_{i} \hspace{0.15in} \text{$\forall i\in\mathcal{I}_{n}^{(r)}$}, \notag
\end{align}
where $\mathcal{I}_{n}^{(r)} \subseteq \mathcal{I}_{n} = \{2,\ldots,n\}$ such that 
\begin{align}
|\mathcal{I}_{n}^{(r)}| = n - r - 1.
\end{align}
\end{defn}
A \emph{near}-log-concave sequence of degree $n-2$ for any finite $n\geq 2$ implies only one $j^*\in\{2,\ldots,n\}$ such that $\eta_{j^*-1}\eta_{j^*+1} \leq \eta^2_{j^*}$ holds with $\mathcal{I}^{(n-2)} = \{j^*\}$; this is when a \emph{near}-log-concave sequence is furthest from being log-concave in terms of the number of elements that satisfy the criteria. In addition, since Definition \ref{nearlogconcave} encapsulates the lower-boundary $r=0$, a \emph{near}-log-concave sequence of degree 0 is a log-concave sequence with $\mathcal{I}_{n}^{(0)} = \mathcal{I}_{n}$. In the limit $n\rightarrow\infty$, we shall later propose and use a measure of deviation of a \emph{near}-log-concave sequence from being a log-concave sequence. Note that $\T_{2,*}$, $\T_{3,*}$ and $\T_{4,*}$ are \emph{near}-log-concave sequences of degree 0; hence, they are log-concave. Also, note that regarding $\T_{0,*}$ and $\T_{1,*}$, \emph{near}-log-concavity is not relevant.
\begin{prop}
\label{propnearlogconcave}
The sequence $\T_{n,*}$ is strictly-positive \emph{near}-log-concave of degree 1 with $\mathcal{I}_{n}^{(1)} = \mathcal{I}_{n}\setminus\{4\}$ for every $n\geq 5$.
\end{prop}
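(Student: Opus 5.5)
The plan is to verify the log-concavity inequality $\T_{n,i-1}\T_{n,i+1}\leq \T_{n,i}^2$ index by index for $i\in\mathcal{I}_n=\{2,\ldots,n\}$, splitting into three ranges. For $i\geq 5$ I will reduce to Lemma \ref{mainlemmaone}. For $i\in\{2,3\}$ I will check directly using the closed form (\ref{algorithmentire}). Finally I will show that the inequality genuinely fails at $i=4$, so that the exceptional set is exactly $\{4\}$ and the degree is exactly $1$. Strict positivity is immediate from (\ref{algorithmentire}), since every entry there is positive for $n\geq 5$.

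\emph{Indices $i\geq 5$.} Since $\tau_{m,j}=\T_{m+3,j+3}$, the triple $(\T_{n,i-1},\T_{n,i},\T_{n,i+1})$ with $5\leq i\leq n$ equals $(\tau_{n-3,j-1},\tau_{n-3,j},\tau_{n-3,j+1})$ with $j=i-3$. Here $2\leq j\leq n-3$, so $j+1\leq (n-3)+1$ and all entries lie inside row $n-3$ of $\tau$. Because $n\geq 5$ gives $n-3\geq 2$, Lemma \ref{mainlemmaone} says $\tau_{n-3,*}$ is log-concave, and the inequality follows. The boundary case $n=5$, $i=5$ corresponds to $\tau_{2,2}$ and is covered by the lemma. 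For $n=5$ this range contains only $i=5$, and for larger $n$ it is nonempty as well.

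\emph{Indices $i=2,3$.} From (\ref{algorithmentire}) we have $\T_{n,1}=1$, $\T_{n,2}=n$, $\T_{n,3}=2n-3$ and $\T_{n,4}=3n-7$.
\begin{itemize}
\item At $i=2$ the inequality reads $2n-3\leq n^2$, which is trivial.
\item At $i=3$ it reads $n(3n-7)\leq(2n-3)^2$, i.e.\ $n^2-5n+9\geq 0$. This quadratic has negative discriminant, so it holds for all $n$.
\end{itemize}

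\emph{Index $i=4$ (failure).} Using $\T_{n,5}=(5+3(n-5))\cdot 2=6n-20$, valid since $5\leq n$, we compare
\begin{align}
\T_{n,3}\T_{n,5}-\T_{n,4}^2=(2n-3)(6n-20)-(3n-7)^2=3n^2-16n+11. \notag
\end{align}
This equals $6>0$ at $n=5$ and is increasing for $n\geq 3$. Hence log-concavity fails at $i=4$ for every $n\geq 5$.

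Combining the three ranges, the set of indices where the inequality holds is exactly $\mathcal{I}_n\setminus\{4\}$, which has cardinality $n-2=n-r-1$ with $r=1$.

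The only delicate point is the bookkeeping in the first step: the shift between $\T$ and $\tau$ must keep $i+1\leq n+1$ inside the row, and Lemma \ref{mainlemmaone} must apply already at $n=5$. Everything else is elementary quadratic algebra. The column $i=4$ fails precisely because it straddles column $3$, which lies outside $\tau$, and the start of the geometric regime in column $5$.
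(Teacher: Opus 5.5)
Your proof is correct and follows essentially the same route as the paper. Both arguments pass the indices $i\geq 5$ to Lemma~\ref{mainlemmaone} and check $i=2,3,4$ from the linear-in-$n$ forms of columns $1$ through $5$; the paper writes these as $g_1,\dots,g_5$ in the variable $x=n-5$, you use the closed form (\ref{algorithmentire}) directly, and your explicit value $3n^2-16n+11=6>0$ at $n=5$ cleanly covers the case $x=0$ that the paper's phrasing ``for every $x>0$'' leaves implicit.
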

\begin{proof}
By virtue of Lemma \ref{mainlemmaone}, we only need to consider the first five columns of the triangular array $\T$. First, define the following continuous functions: 
\begin{align}
g_1(x) = 1, \,\,\,\, g_2(x) = 5 + x, \,\,\,\, g_3(x) = 7 + 2x, \notag
\end{align}
for any $x\in\R_+$. Since $\T_{n,j} = \T_{n-1,j} + \psi(j)$ for every $n \geq 1$ and $1 \leq j \leq n-1$, the first three columns of $\T$ for $n\geq 5$ define discrete points on $g_1$, $g_2$ and $g_3$, respectively. Clearly, we have 
\begin{align}
g_2(x)^2 - g_1(x)g_3(x) \geq 0, \notag 
\end{align}
for every $x\geq 0$, which shows the log-concavity of the first three columns of $\T$ for any $n\geq 5$. Similarly, define $g_4(x) = 8 + 3x$, from which we again have the log-concavity property 
\begin{align}
g_3(x)^2 - g_2(x)g_4(x) \geq 0, \notag 
\end{align}
for every $x\geq 0$. Finally, define $g_5(x) = 10 + 6x$ where log-concavity fails with 
\begin{align}
g_4(x)^2 - g_3(x)g_5(x) < 0 \notag 
\end{align}
for every $x > 0$. Thus, using Lemma \ref{mainlemmaone}, the statement follows.
\end{proof}
From Proposition \ref{propnearlogconcave}, $\T$ hosts infinitely many \emph{unimodal} \emph{near}-log-concave sequences with no zeros. We also see that log-concavity always breaks at the same column of the array at $j=4$. Accordingly,
\begin{align}
\left( \T^2_{n,i} - \T_{n,i-1}\T_{n,i+1} \right)\left( \T^2_{n+k,i} - \T_{n+k,i-1}\T_{n+k,i+1} \right) \geq 0 \hspace{0.1in} \text{$\forall i=2,\ldots,n$,} \notag
\end{align}
for every $n\geq 5$ for any $k\geq 0$. In addition, from (\ref{ratiorows}) and (\ref{remakeofdiagonals}), we have $\sup(\T_{n,*}) = \T_{n,n}$. Next, we define a \emph{log-concavity deviation} measure to quantify how far a \emph{near}-log-concave sequence is to being a log-concave sequence.
\begin{defn}
Let $\eta = [\eta_1,\ldots,\eta_{n+1}]$ for $n\geq 2$ be a sequence, where $\min(\eta) > 0$. Define 
\begin{align}
\Theta_{i}(\eta) = \max\left( \log(\eta_{i-1}) + \log(\eta_{i+1}) - 2\log(\eta_i) \,,\, 0 \right), \label{differentialmetricfunction}
\end{align}
for every $i\in\{2,\ldots,n\}$. Then,
\begin{align}
\Theta(\eta) = \sum_{i=2}^{n} \Theta_{i}(\eta), \label{differentialmetricfunctionsummation}
\end{align}
is called the \emph{log-concavity deviation} of $\eta$ under logarithmic scaling.
\end{defn}
\begin{prop}
For any sequence $\eta$ where $\min(\eta) > 0$, $\Theta(\eta) = 0$ if and only if $\eta$ is log-concave.
\end{prop}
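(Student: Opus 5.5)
The proof is a direct unpacking of the definition of $\Theta$, using only that every $\Theta_i(\eta)$ is non-negative and that $\log$ is strictly increasing on $\R_+$. Since $\min(\eta)>0$, every $\log(\eta_i)$ is finite, so each $\Theta_i(\eta)$ in (\ref{differentialmetricfunction}) is a well-defined real number. Being a maximum with $0$, it is non-negative.

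The first step is the equivalence, for each fixed $i\in\{2,\ldots,n\}$, between $\Theta_i(\eta)=0$ and the log-concavity inequality $\eta_{i-1}\eta_{i+1}\leq \eta_i^2$. By definition of the maximum, $\Theta_i(\eta)=0$ holds exactly when
\begin{align}
\log(\eta_{i-1})+\log(\eta_{i+1})-2\log(\eta_i)\leq 0. \notag
\end{align}
This is the same as $\log(\eta_{i-1}\eta_{i+1})\leq \log(\eta_i^2)$. Because all entries are strictly positive and $\log$ is strictly increasing on $\R_+$, that in turn is equivalent to $\eta_{i-1}\eta_{i+1}\leq \eta_i^2$.

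The second step is to pass to the sum (\ref{differentialmetricfunctionsummation}). A finite sum of non-negative reals is zero if and only if every summand is zero. Hence $\Theta(\eta)=0$ if and only if $\Theta_i(\eta)=0$ for every $i\in\mathcal{I}_n=\{2,\ldots,n\}$. By the first step, this holds if and only if $\eta_{i-1}\eta_{i+1}\leq\eta_i^2$ for all $i\in\mathcal{I}_n$, which is exactly log-concavity. Equivalently, in the language of Definition \ref{nearlogconcave}, $\eta$ is near-log-concave of degree $0$ with $\mathcal{I}_n^{(0)}=\mathcal{I}_n$.

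There is no genuine obstacle here. The only care needed is to state the positivity hypothesis explicitly, since it is what makes the logarithms finite and the monotonicity argument valid; without it, zero entries would make $\Theta$ undefined. For the same reason, $n\geq 2$ should be understood as in the definition of $\Theta$.
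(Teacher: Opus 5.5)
Your proof is correct and follows essentially the same route as the paper: both rest on the fact that $\Theta_i(\eta)=0$ exactly when $\eta_{i-1}\eta_{i+1}\leq\eta_i^2$, and that a finite sum of non-negative terms vanishes if and only if every term does. The paper states the converse contrapositively (a failing index $j^*$ gives $\Theta_{j^*}(\eta)>0$) and leaves the non-negativity of each $\Theta_i$ and the monotonicity of $\log$ implicit, whereas you spell both out.
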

\begin{proof}
Let $\eta$ be a sequence such that $\min(\eta) > 0$. If $\eta$ is log-concave, then $\Theta(\eta) = 0$.  For any $\eta$ that is not log-concave, there exists at least one $j^*\in\{2,\ldots,|\eta|-1\}$ such that $\Theta_{j^*}(\eta) > 0$, using (\ref{differentialmetricfunction}). The statement follows from (\ref{differentialmetricfunctionsummation}). 
\end{proof}
%We shall now prove an asymptotic behaviour of  $\T$ in terms of the log-concavity deviation.
\begin{prop}
\label{logconcavecorrection}
Let $\Theta$ be as in (\ref{differentialmetricfunctionsummation}). Then, $\Theta(\T_{n,*}) \rightarrow \log(4/3)$ as $n \rightarrow \infty$.
\end{prop}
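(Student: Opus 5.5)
By Proposition \ref{propnearlogconcave}, for every $n\geq 5$ the sequence $\T_{n,*}$ is strictly positive and log-concave at every interior index except $i=4$. Hence $\Theta_i(\T_{n,*})=0$ for all $i\in\mathcal{I}_n\setminus\{4\}$, and $\Theta_4(\T_{n,*})>0$. The whole sum (\ref{differentialmetricfunctionsummation}) therefore collapses to a single term. The maximum with $0$ in (\ref{differentialmetricfunction}) is attained by the first argument, so
\begin{align}
\Theta(\T_{n,*}) = \log\left(\frac{\T_{n,3}\,\T_{n,5}}{\T_{n,4}^2}\right), \qquad n\geq 5. \notag
\end{align}
The plan is to evaluate this ratio explicitly and pass to the limit.

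I would read the three entries off the closed form (\ref{algorithmentire}). For $n\geq 5$ we have $\T_{n,3}=2n-3$. Next, $\T_{n,4}=5+3(n-4)=3n-7$, which uses the case $4\leq j\leq n$ with factor $2^0$. Finally, $\T_{n,5}=(5+3(n-5))\,2=2(3n-10)$, which is valid because $5\leq n$. As a consistency check, these agree with $g_3$, $g_4$, $g_5$ from the proof of Proposition \ref{propnearlogconcave} at $x=n-5$. This gives
\begin{align}
\frac{\T_{n,3}\,\T_{n,5}}{\T_{n,4}^2} = \frac{2(2n-3)(3n-10)}{(3n-7)^2}. \notag
\end{align}
The numerator is $12n^2+O(n)$ and the denominator is $9n^2+O(n)$, so the ratio tends to $12/9=4/3$. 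Continuity of $\log$ on $(0,\infty)$ then yields $\Theta(\T_{n,*})\to\log(4/3)$.

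The only step needing care is the reduction of the sum to the single index $i=4$. This rests on the exact structure of Proposition \ref{propnearlogconcave} together with Lemma \ref{mainlemmaone}: the indices $i\geq 5$ correspond to interior positions of $\tau_{n-3,*}$, and $i=2,3$ are covered by $g_1,\dots,g_4$. I would also verify directly that the first argument of the maximum at $i=4$ is positive for all $n\geq 5$. This holds because $(3n-7)^2-2(2n-3)(3n-10)=-3n^2+16n-11<0$ for $n\geq 5$, so the maximum is indeed that argument. Beyond this, the computation is routine.
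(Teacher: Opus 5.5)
Your proof is correct and follows the paper's argument. You collapse $\Theta(\T_{n,*})$ to the single term $\Theta_4$ via Proposition \ref{propnearlogconcave}, then take the limit of $\T_{n,3}\T_{n,5}/\T_{n,4}^2$ using the linear column profiles; your explicit entries $2n-3$, $3n-7$, $2(3n-10)$ are exactly the paper's $g_3,g_4,g_5$ at $x=n-5$. Your direct check that $-3n^2+16n-11<0$ for $n\geq 5$ is a small refinement: it gives $\Theta(\T_{n,*})=\log\bigl(\T_{n,3}\T_{n,5}/\T_{n,4}^2\bigr)$ exactly for every $n\geq 5$, so you do not need the paper's appeal to continuity of the maximum.
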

\begin{proof}
Using Proposition \ref{propnearlogconcave}, we have $\Theta_{i}\left( \T_{n,*} \right) = 0$ for every $i \in \mathcal{I}_{n}^{(1)}$, for every $n\geq 5$, which implies the following:
\begin{align}
\sum_{i \in \mathcal{I}_{n}^{(1)}} \Theta_{i}( \T_{n,*} ) = 0 \hspace{0.1in} \text{$\forall n\geq 5$} \hspace{0.1in} \Rightarrow \lim_{n \rightarrow \infty}\Theta(\T_{n,*}) = \lim_{n \rightarrow \infty} \Theta_{4}(\T_{n,*}). \label{allzeroselsewhere}
\end{align}
From Proposition \ref{propnearlogconcave}, recall the functions $g_3(x) = 7 + 2x$, $g_4(x) = 8 + 3x$  and $g_5(x) = 10 + 6x$ for any $x\in\R_+$. Accordingly,
\begin{align}
\lim_{n \rightarrow \infty} \left(\log(\T_{n,3}) + \log(\T_{n,5}) - 2\log(\T_{n,4})\right) = \lim_{x \rightarrow \infty} \left(\log(g_3(x)) + \log(g_5(x)) - 2\log(g_4(x))\right). \label{mainlimitconvergencenum}
\end{align}
Since $\log(\cdot)$ is strictly continuous on the domain $(0, \infty)$, we have the following limit:
\begin{align*}
\lim_{x \rightarrow \infty} \left(\log(g_3(x)) + \log(g_5(x)) - 2\log(g_4(x))\right) &= \lim_{x \rightarrow \infty} \log \left( \frac{g_3(x)g_5(x)}{g_4(x)^2} \right) = \log \left( \lim_{x \rightarrow \infty} \frac{12x^2 + 62x + 70}{9x^2 + 48x + 64} \right). \notag
\end{align*}
Therefore, from (\ref{mainlimitconvergencenum}), it follows that
\begin{align}
\lim_{n \rightarrow \infty} \left(\log(\T_{n,3}) + \log(\T_{n,5}) - 2\log(\T_{n,4})\right) = \log \left( \lim_{x \rightarrow \infty} \frac{12 + \frac{62}{x} + \frac{70}{x^2}}{9 + \frac{48}{x} + \frac{64}{x^2}} \right) = \log \left( \frac{4}{3} \right). \label{mainlimitconvergencenumfinal}
\end{align}
Hence, using (\ref{allzeroselsewhere}), (\ref{mainlimitconvergencenumfinal}) and the continuity of the $\max$ function, we have the convergence $\Theta(\T_{n,*}) \rightarrow \log(4/3)$ as $n \rightarrow \infty$.
\end{proof}
One can interpret Proposition \ref{propnearlogconcave} and Proposition \ref{logconcavecorrection} together as follows: $\T$ starts log-concave, becomes \emph{near}-log-concave of degree 1 soon after, and its log-concavity deviation progressively converges to a constant above zero under logarithmic scaling. We shall prove another asymptotic behavior of $\T$. Define a sequence given by the product of each row as follows: 
\begin{align}
S_n = \prod_{j=1}^{n+1} \T_{n,j}, \notag 
\end{align}
for every $n\geq 0$. Next, define the following ratio:
\begin{align}
\phi_n = \frac{S_{n-1}S_{n+1}}{(S_n)^2} \label{producsequence}
\end{align}
for every $n\geq 1$. We are now in position to state the following result.
\begin{prop}
\label{asymptoticproposition}
Let $\phi_n$ be as in (\ref{producsequence}) for every $n\geq 1$. Then, $\phi_n \rightarrow 2$ as $n \rightarrow \infty$.
\end{prop}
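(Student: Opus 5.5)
The plan is to work with the one-step ratio $R_n \triangleq S_{n+1}/S_n$ rather than with $S_n$ itself, since $\phi_n = R_n / R_{n-1}$. The key input is the diagonal relation (\ref{maindiagonalrelationfirsteq}) from the proof of Lemma \ref{mainlemmaone}. Translated back to $\T$, it says
\begin{align}
\T_{n+1,j} = 2\,\T_{n,j-1} \quad \text{for } 5 \leq j \leq n+1, \notag
\end{align}
for $n$ large enough (say $n \geq 5$). I will double-check this against the closed form (\ref{algorithmentire}): $(5+3(n+1-j))2^{j-4} = 2\,(5+3(n-(j-1)))2^{j-5}$. Together with $\T_{n,n+1} = 1$, this means row $n+1$ is, from column $5$ onward, an exact doubled shift of row $n$ from column $4$ onward. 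Only the first four columns of each row are ``new''.

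First I will compute $R_n$ by telescoping the products. Write $S_n = \prod_{j=1}^{n}\T_{n,j}$, using $\T_{n,n+1}=1$. Then
\begin{align}
S_{n+1} = \Big(\prod_{j=1}^{4}\T_{n+1,j}\Big)\, 2^{\,n-3} \prod_{j=4}^{n}\T_{n,j}. \notag
\end{align}
Hence all factors $\T_{n,j}$ with $j \geq 4$ cancel, leaving
\begin{align}
R_n = 2^{\,n-3}\, \frac{\T_{n+1,1}\T_{n+1,2}\T_{n+1,3}\T_{n+1,4}}{\T_{n,1}\T_{n,2}\T_{n,3}} = 2^{\,n-3}\,\frac{(n+1)(2n-1)(3n-4)}{n(2n-3)}. \notag
\end{align}
Here I used (\ref{algorithmentire}): $\T_{n,1}=1$, $\T_{n,2}=n$, $\T_{n,3}=2n-3$ and $\T_{n+1,4}=3n-4$.

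Second, I will form the ratio $\phi_n = R_n/R_{n-1}$ for $n$ large:
\begin{align}
\phi_n = 2\cdot \frac{(n+1)(2n-1)(3n-4)}{n(2n-3)}\cdot \frac{(n-1)(2n-5)}{n(2n-3)(3n-7)}. \notag
\end{align}
This is $2$ times a ratio of two polynomials of degree $5$, both with leading coefficient $12$. The rational factor therefore tends to $1$, and $\phi_n \to 2$. The finitely many small $n$, where the diagonal relation or the closed form for column $4$ does not yet apply, are irrelevant to the limit.

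The main obstacle is bookkeeping rather than analysis. The index ranges in the cancellation must be exactly right: how many factors of $2$ appear ($n-3$), and that the column-$4$ entry of row $n$ is absorbed while row $n+1$ contributes a fresh column-$4$ entry. I will also confirm the threshold $n$ from which (\ref{maindiagonalrelationfirsteq}) holds in $\T$-coordinates, so that $R_{n-1}$ is also given by the formula. A quick numerical check of $R_n$ against the rows $\tau_{2,*}$ and $\tau_{3,*}$ in (\ref{initialtauseq}) will settle this.
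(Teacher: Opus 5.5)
Your proof is correct and follows essentially the same route as the paper: both use the diagonal doubling relation (\ref{maindiagonalrelationfirsteq}) to cancel every factor beyond the first few columns, which reduces $\phi_n$ to a ratio of low-degree polynomials in $n$ with limit $2$. The differences are only bookkeeping. You pass through the one-step ratio $R_n=S_{n+1}/S_n$, so the factor $2$ appears as $2^{n-3}/2^{n-4}$ and you get an explicit closed form for $\phi_n$ whose limit follows by comparing leading coefficients. The paper instead splits $S_{n-1},S_n,S_{n+1}$ into head products $W_4,W_5,W_6$ and tails whose ratio is exactly $1$; there the $2$ comes from the doubled column-$5$ entry in the head, and the limit is read off a Laurent expansion.
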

\begin{proof}
Define the following triplets from sub-sequences of $\T$ in the following way:
\begin{align}
s^{(4)}_{n-1} = \prod_{j=4}^{n} \T_{n-1,j} \hspace{0.1in} \text{and} \hspace{0.1in} s^{(5)}_{n} = \prod_{j=5}^{n+1} \T_{n,j} \hspace{0.1in} \text{and} \hspace{0.1in} s^{(6)}_{n+1} = \prod_{j=6}^{n+2} \T_{n+1,j}\notag
\end{align}
for $n\geq 6$. Using (\ref{maindiagonalrelation}) from Lemma \ref{mainlemmaone}, we must have the log-concavity ratio satisfy the following relation:
\begin{align}
\frac{s^{(4)}_{n-1}s^{(6)}_{n+1}}{(s^{(5)}_n)^2} = 1, \label{asymptitcone} 
\end{align}
for every $n\geq 6$. Hence, it suffices to study the product asymptotes of the first 6 columns of $\T$ for $n\geq 6$. Accordingly, define the following functions:
\begin{align}
&W_4(k) = (5+k)(7+2k) \notag \\
&W_5(k) = (5+k+1)(7+2(k+1))(8+3(k+1))\notag \\
&W_6(k) = (5+k+2)(7+2(k+2))(8+3(k+2))(16+6(k+1)) \notag
\end{align}
for every $k\in\Z_+$. Note that  
\begin{align}
W_4(k) = \prod_{j=1}^{3} \T_{k+5,j} \hspace{0.1in} \text{and} \hspace{0.1in} W_5(k) = \prod_{j=1}^{4} \T_{k+6,j} \hspace{0.1in} \text{and} \hspace{0.1in} W_6(k) = \prod_{j=1}^{5} \T_{k+7,j}, \notag
\end{align}
for every $k\in\Z_+$. Accordingly, define the ratio:
\begin{align}
W^{(*)}(k) = \frac{W_4(k)W_6(k)}{(W_5(k))^2} \hspace{0.1in} \forall k\in\Z_+.
\end{align}
Thus, the Laurent series expansion of $W^{(*)}$ at $k=\infty$, which we denote as $\mathbb{L}_{\infty}(W^{(*)})$, is
\begin{align}
\mathbb{L}_{\infty}(W^{(*)}) = 2 + \frac{2}{k} - \frac{34}{3k^2} + \frac{584}{9k^3} + \text{\emph{O}}\left( \left(\frac{1}{k}\right)^4 \right), \notag
\end{align}
Hence, $W^{(*)}(k) \rightarrow 2$ as $k\rightarrow \infty$. Finally, note that
\begin{align}
S_{n-1} = W_4(n-6)s^{(4)}_{n-1} \hspace{0.1in} \text{and} \hspace{0.1in} S_{n} = W_5(n-6)s^{(5)}_{n} \hspace{0.1in} \text{and} \hspace{0.1in} S_{n+1} = W_6(n-6)s^{(6)}_{n+1}, \notag
\end{align}
for every $n\geq 6$, which proves the statement using (\ref{asymptitcone}).
\end{proof}
Note that the ratio in (\ref{producsequence}) also quantifies a form of log-concavity deviation -- Proposition \ref{asymptoticproposition} shows that a sequence formed by the products $\{S_n \, : \, n\geq 1\}$ does not achieve log-concavity asymptotically. We shall now continue our analysis through the sum of each row of $\T$, which brings forward an elegant property. For what follows, we denote
\begin{align}
||\T_{n,*}|| \triangleq \sum_{j=1}^{n+1} \T_{n,j}, \notag
\end{align}
which is the $\mathcal{L}_1$-norm of the sequence, since every element is positive.
\begin{prop}
\label{summationpowertwo}
For every $n\geq 0$, $||\T_{n,*}|| = 2^{n}$.
\end{prop}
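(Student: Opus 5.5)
Our approach is induction on $n$ via the row recurrence: we show $||\T_{n,*}|| = 2||\T_{n-1,*}||$ for all $n$ beyond a small threshold, and check the first rows directly. For $n\in\{0,1,2,3\}$ the rows coincide with Pascal's triangle, so the sums are $1,2,4,8$. For $n=4$, (\ref{algorithmentire}) gives $\T_{4,*}=[1,4,5,5,1]$, with sum $16$. So it suffices to prove the doubling for $n\geq 5$.

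For the inductive step we compare row $n$ with row $n-1$ column by column. By step 4 of the algorithm, $\T_{n,j}=\T_{n-1,j}+\psi(j)$ for $1\le j\le n-1$. The last entry does not change, since $\T_{n,n+1}=\T_{n-1,n}=1$. The diagonal entry $\T_{n,n}$ is new. Hence
\begin{align*}
||\T_{n,*}|| - ||\T_{n-1,*}|| = \sum_{j=1}^{n-1}\psi(j) + \T_{n,n}.
\end{align*}
We therefore need to prove
\begin{align*}
\sum_{j=1}^{n-1}\psi(j) + \T_{n,n} = 2^{n-1}\quad (n\geq 5).
\end{align*}

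We would get closed forms for both pieces. From (\ref{columnprogressiontwo}) we have $\psi(j)=3\cdot 2^{j-4}$ for $j\ge 4$, so
\begin{align*}
\sum_{j=1}^{n-1}\psi(j) = 3+3(2^{n-4}-1)=3\cdot 2^{n-4}.
\end{align*}
This also follows directly from (\ref{columnprogressionfunc}), since the partial sum up to $n-1$ equals $\psi(n)$. From (\ref{remakeofdiagonals}) with $\T_{4,4}=5$, or from (\ref{algorithmentire}) with $j=n$, we have $\T_{n,n}=5\cdot 2^{n-4}$. Adding the two gives $8\cdot 2^{n-4}=2^{n-1}$. Then $||\T_{n,*}||=2\cdot 2^{n-1}=2^n$, which closes the induction.

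There is an alternative cross-check: sum (\ref{algorithmentire}) directly. Take the boundary ones, plus $n$, plus $2n-3$, plus $\sum_{j=4}^{n}(5+3(n-j))2^{j-4}$. The last sum can be evaluated by substituting $m=j-4$ and using $\sum m2^m$, which should yield $2^n-3n$. The main obstacle is purely bookkeeping. One must make sure the index ranges are correct at the transition between the special columns $j\le 3$ and the generic regime, and confirm that the recurrence in step 4 is valid for all $j\le n-1$ at the starting value $n=5$. Apart from this, the argument is elementary.
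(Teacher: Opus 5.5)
Your argument is correct, and it takes a different route from the paper's. You use the vertical recurrence of step 4: row $n$ is row $n-1$ with $\psi(j)$ added in columns $1,\dots,n-1$, plus the new diagonal entry, and the trailing $1$ cancels. The whole statement then reduces to the scalar identity $\sum_{j=1}^{n-1}\psi(j)+\T_{n,n}=\psi(n)+\T_{n,n}=3\cdot 2^{n-4}+5\cdot 2^{n-4}=2^{n-1}$.

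The paper works along diagonals instead. It takes the shift relation $\tau_{n,j}=2\tau_{n-1,j-1}$ from Lemma \ref{mainlemmaone}, equivalently $\T_{n+1,j+1}=2\T_{n,j}$ for $4\le j\le n$. This shows that columns $5,\dots,n+1$ of row $n+1$ sum to exactly twice columns $4,\dots,n$ of row $n$. Only a boundary correction in columns $2$--$4$ then remains, namely $||\T_{n+1,(2\rightarrow 4)}|| = 2||\T_{n,(2\rightarrow 3)}||+2$, which the paper checks with the linear column functions $g_2,g_3,g_4$ (via $h_2=2h_1$).

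Your route is more elementary and self-contained. It needs only the defining recurrences and the closed forms $\psi(j)=3\cdot 2^{j-4}$ and $\T_{n,n}=5\cdot 2^{n-4}$, not Lemma \ref{mainlemmaone}. Your inductive step in fact already holds at $n=4$, since $3+5=8$. The paper's route instead shows that the shift (Toeplitz) structure of $\tau$ is what drives the doubling, and that structure is reused later in the paper.

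There is one small slip, in the optional cross-check only: $\sum_{j=4}^{n}(5+3(n-j))2^{j-4}=2^n-3n+1$, not $2^n-3n$. For example, at $n=5$ the sum is $8+10=18$. With your value the row total would come out as $2^n-1$ rather than $2^n$. The main induction does not use this computation, so the proof stands.
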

\begin{proof}
First five rows of $\T$ can be quickly checked with 
\begin{align}
||\T_{0,*}|| = 1, \,\,\,\, ||\T_{1,*}|| = 2, \,\,\,\, ||\T_{2,*}|| = 4, \,\,\,\, ||\T_{3,*}|| = 8, \,\,\,\, ||\T_{4,*}|| = 16. \notag 
\end{align}
Hence, $||\T_{n,*}|| = 2^{n}$ for $0 \leq n \leq 4$. Next, let $\tau_{n,(i\rightarrow k)}$ be a sub-sequence of $\tau_{n,*}$ given by
\begin{align}
\tau_{n,(i\rightarrow k)} \triangleq [\tau_{n,i},\ldots,\tau_{n,k}] \notag
\end{align}
for any $1 \leq i \leq k \leq n + 1$. From Lemma \ref{mainlemmaone}, we know $\tau_{n,j} = 2 \tau_{n-1,j-1}$ for every $n \geq 2$ and $2 \leq j \leq n$, hence, we must have
\begin{align}
||\tau_{n+1,(2\rightarrow n+1)}|| = 2||\tau_{n,(1\rightarrow n)}|| \label{diagonalsummationrel}
\end{align}
for every $n \geq 2$. Accordingly, since $\T_{n,n+1} = 1$ for every $n\geq 0$ and $\T_{n,1} = 1$ from $\T_{n,1} = \T_{n-1,1} + \psi(1)$ for every $n \geq 1$ and $1 \leq j \leq n-1$, it remains to prove
\begin{align}
||\T_{n+1,(2\rightarrow 4)}|| = 2 ||\T_{n,(2\rightarrow 3)}|| + 2 \label{diagonalsummationreltwo}
\end{align}
for $n\geq 5$, since (\ref{diagonalsummationrel}) holds. As done in Proposition \ref{propnearlogconcave}, let $g_2(x) = 5 + x$, $g_3(x) = 7 + 2x$ and $g_4(x) = 8 + 3x$. Now define $h_1,h_2 : \N_+ \rightarrow \N_+$ as follows:
\begin{align}
&h_1(k) = g_2(k) + g_3(k) + 2 = 14 + 3k \notag \\
&h_2(k) = g_2(k+1) + g_3(k+1) + g_4(k+1) + 2 = 22 + 6(k+1), \notag
\end{align}
for every integer $k\geq 0$, which correspond to the summations $||\T_{n,(2\rightarrow 3)}|| + 2$ and $||\T_{n+1,(2\rightarrow 4)}|| + 2$, respectively, for every $n\geq 5$. Thus, since we have $h_2(k) / h_1(k) = 2$ for every $k\geq 0$, we must also have
\begin{align}
\frac{||\T_{n+1,(2\rightarrow 4)}|| + 2}{||\T_{n,(2\rightarrow 3)}|| + 2} = 2, \notag
\end{align}
which proves (\ref{diagonalsummationreltwo}). Therefore, using (\ref{diagonalsummationrel}) and (\ref{diagonalsummationreltwo}), we must have $||\T_{n,*}|| = 2^{n}$ for every $n\geq 5$. The statement follows since we also know $||\T_{n,*}|| = 2^{n}$ for $0 \leq n \leq 4$.
\end{proof}
Every element in $\T_{n,*}$ is unique for $n\geq 5$, except for $\T_{n,1} = \T_{n,n+1} = 1$. Hence, using Proposition \ref{summationpowertwo}, we can also write 
\begin{align}
||\T_{n,*}|| = 2^{|\T_{n,*}|}, \notag
\end{align} 
for $n\geq 5$, where $|\T_{n,*}|$ denotes the cardinality of the set of elements in $\T_{n,*}$. Note also that we have
\begin{align}
\sum_{n=0}^N\sum_{j=1}^{n+1} \T_{n,j} = \sum_{n=0}^N ||\T_{n,*}|| = 2^{N+1} - 1, \notag
\end{align}
for any finite $N\geq 0$, as the sum of all integer elements of every row until the $N$th row (including), which follows from Proposition \ref{summationpowertwo}.
\begin{coro}
For every $n\geq 1$, $\T_{n,*}$ defines the degree sequence of a multigraph (without loops).
\end{coro}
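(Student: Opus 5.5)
The plan is to use the classical criterion for loopless multigraphs (see \cite{graphone}). A sequence $[d_1,\ldots,d_m]$ of non-negative integers with $m\geq 2$ is the degree sequence of a loopless multigraph if and only if two conditions hold. First, $\sum_i d_i$ is even. Second, $\max_i d_i \leq \sum_i d_i - \max_i d_i$, i.e. the largest degree is at most the sum of the others. So we only need to check these two conditions for $\T_{n,*}$ with $n\geq 1$, which has $m = n+1 \geq 2$ entries.

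\textbf{Parity.} By Proposition \ref{summationpowertwo}, $||\T_{n,*}|| = 2^n$, which is even for every $n\geq 1$.

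\textbf{Maximal degree.} From the discussion after Proposition \ref{propnearlogconcave}, $\sup(\T_{n,*}) = \T_{n,n}$ for the relevant $n$. The condition therefore reads $2\T_{n,n} \leq 2^n$, i.e. $\T_{n,n}\leq 2^{n-1}$.
\begin{itemize}
\item For $1\leq n\leq 4$ we verify this directly, since the rows are $[1,1]$, $[1,2,1]$, $[1,3,3,1]$ and $[1,4,5,5,1]$, with sums $2,4,8,16$. The largest entries $1,2,3,5$ are each at most half of the corresponding sum.
\item For $n\geq 4$, the representation (\ref{algorithmentire}) gives $\T_{n,n} = 5\cdot 2^{n-4} = \tfrac{5}{16}2^n < 2^{n-1}$. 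This avoids needing the supremum identity exactly: it is enough that every entry is at most $2^{n-1}$.
\item For any entry $\T_{n,j}$ with $4\leq j\leq n$, (\ref{algorithmentire}) gives $(5+3(n-j))2^{j-4}$. Writing $m=n-j$, this equals $2^{n-4}(5+3m)2^{-m}$. Since $(5+3m)2^{-m}\leq 5$, every such entry is at most $\tfrac{5}{16}2^n$.
\item The entries in columns $1,2,3$ are $1$, $n$ and $2n-3$, which are clearly below $2^{n-1}$ for $n\geq 4$.
\end{itemize}
Hence the largest entry is at most half the total, and the criterion applies.

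The only step needing care is citing, or briefly justifying, the loopless multigraph criterion. If a self-contained argument is wanted, I would use a greedy pairing. Repeatedly join an edge between the two vertices of currently largest remaining degree. Both conditions are preserved: the sum drops by $2$, and the max-versus-rest inequality persists, using parity when the two largest degrees are equal. This continues until all degrees are zero, yielding a loopless multigraph with degrees $\T_{n,*}$.
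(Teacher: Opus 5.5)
Your proof is correct. Its skeleton is the same as the paper's: the parity-plus-maximum criterion for loopless multigraphs, with parity supplied by Proposition \ref{summationpowertwo}. The difference is in how you verify the maximum-degree condition.

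\textbf{The paper's route.} It takes $\sup(\T_{n,*})=\T_{n,n}$ as known and proves the stronger local inequality $\T_{n,n}\leq \T_{n,n-2}+\T_{n,n-1}$. This is checked by hand for $3\leq n\leq 6$. For $n>6$ it propagates via the doubling relation (\ref{maindiagonalrelationfirsteq}), since $\T_{n,n}$, $\T_{n,n-1}$ and $\T_{n,n-2}$ all double from row $n-1$ to row $n$ once the column index is at least $5$.

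\textbf{Your route.} You argue globally. The closed form (\ref{algorithmentire}) bounds every entry of row $n\geq 4$ by $\frac{5}{16}2^n$, and comparing this with half the row sum, $2^{n-1}$, gives the condition with room to spare.

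\textbf{What each buys.} Your argument never needs to locate the maximum, so it does not depend on the supremum identity, and it gives an explicit margin. The cost is that it relies on the closed form (\ref{algorithmentire}), which the paper states but does not derive; it is easily checked by induction from the recurrence. The paper's argument uses only the recurrence structure established in Lemma \ref{mainlemmaone}. It also yields the sharper fact that the largest degree is already dominated by its two left neighbours alone.

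\textbf{Two small points.}
\begin{itemize}
\item Your case list for $n\geq 4$ covers columns $1,2,3$ and $4,\ldots,n$ but omits the last column $\T_{n,n+1}=1$. This entry is trivially fine.
\item In your greedy-pairing sketch, the invariant is preserved automatically when the new maximum is $d_1-1$. When the three largest remaining degrees are equal to some $d\geq 2$, the bound $S\geq 3d\geq 2d+2$ already suffices. Parity is genuinely needed only when $d_1=d_2=d_3=1$, as in $[1,1,1]$. So ``when the two largest degrees are equal'' slightly misplaces where parity enters, although the argument itself goes through.
\end{itemize}
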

\begin{proof}
Every element of $\T_{n,*}$ is positive and $||\T_{n,*}||$ is an even number for every $n\geq 1$ from Proposition \ref{summationpowertwo}. In addition, since we have $\sup(\T_{n,*}) = \T_{n,n}$, we need to show
\begin{align}
\T_{n,n} \leq \sum_{j\neq n} \T_{n,j}, \label{maximumboundgraph}
\end{align}
for every $n\geq 1$. This can be verified by direct computation for $n\in\{1,2\}$. As for $3 \leq n \leq 6$, it can be observed that $\T_{n,n} \leq \T_{n,n-2} + \T_{n,n-1}$. Finally, for any $n>6$, we must also have 
\begin{align}
\T_{n,n} \leq \T_{n,n-2} + \T_{n,n-1}, \notag 
\end{align}
due to (\ref{maindiagonalrelationfirsteq}). Hence, (\ref{maximumboundgraph}) holds for every $n\geq 1$, and the statement follows from \cite{graphone}.
\end{proof}
Proposition \ref{summationpowertwo} further allows us to study how polynomials defined on $\T$ deviate from those defined on Pascal's triangle (A007318 in OEIS) -- similar results can be explored for other sequences.
\begin{coro}
\label{polynomialcorollary}
Let $p^{(n)}_{\T}$ be the polynomial as given in (\ref{triangularpolynomial}) for every $n\geq 0$. Define 
\begin{align}
q^{(n)}(x) = p^{(n)}_{\T}(x) - (x+1)^n \hspace{0.1in} \forall x \in \R, \notag
\end{align}
as the deviation polynomial $q^{(n)}$ for every $n\geq 0$. Then, 
\begin{enumerate}
\item $q^{(n)} = 0$ for $n\in\{0,1,2,3\}$
\item $q^{(n)}$ has degree $n-1$ for $n\geq4$ and its coefficients sum to zero
\end{enumerate}
Accordingly, $q^{(n)}$ has a root at $x=1$ for every $n\geq 0$.
\end{coro}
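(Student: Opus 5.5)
Item 1 follows from what the paper has already recorded. For $n\in\{0,1,2,3\}$ the rows $\T_{n,*}$ coincide with the first four rows of Pascal's triangle, so $p^{(n)}_{\T}(x)=(x+1)^n$ and $q^{(n)}\equiv 0$. One can confirm this directly from (\ref{algorithmentire}): the rows are $[1]$, $[1,1]$, $[1,2,1]$ and $[1,3,3,1]$.

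For item 2 with $n\geq 4$, first compare leading coefficients. Both $p^{(n)}_{\T}$ and $(x+1)^n$ have degree $n$ and leading coefficient $\T_{n,n+1}=1=\binom{n}{n}$, so the $x^n$ terms cancel and $\deg q^{(n)}\leq n-1$. The coefficient of $x^{n-1}$ in $q^{(n)}$ is $\T_{n,n}-\binom{n}{n-1}=\T_{n,n}-n$, and it remains to show this is nonzero. From (\ref{algorithmentire}) with $j=n\geq 4$ we get $\T_{n,n}=5\cdot 2^{n-4}$, which equals $\T_{4,4}=5$ at $n=4$ and satisfies (\ref{remakeofdiagonals}). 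A short induction then gives $5\cdot 2^{n-4}>n$ for all $n\geq 4$: the base case is $5>4$, and if $5\cdot 2^{n-4}>n$ then $5\cdot 2^{n-3}>2n\geq n+1$. Hence the $x^{n-1}$ coefficient is strictly positive and $\deg q^{(n)}=n-1$ exactly.

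For the coefficient sum, note that the coefficients of any polynomial sum to its value at $x=1$. So
\begin{align*}
q^{(n)}(1)=p^{(n)}_{\T}(1)-2^n=\sum_{j=1}^{n+1}\T_{n,j}-2^n=||\T_{n,*}||-2^n=0,
\end{align*}
where the last equality is Proposition \ref{summationpowertwo}. This argument holds for every $n\geq 0$, so it yields the final claim directly: $x=1$ is a root of $q^{(n)}$ for all $n\geq 0$. For $n\leq 3$ this is trivial since $q^{(n)}\equiv 0$, and for $n\geq 4$ it is a genuine root of a nonzero polynomial of degree $n-1$.

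The only step beyond bookkeeping is the strict inequality $\T_{n,n}\neq n$ needed for the exact degree, and the induction above settles it. Everything else is an immediate consequence of Proposition \ref{summationpowertwo}.
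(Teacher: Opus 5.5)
Your proof is correct. For item 1, for the coefficient sum, and for the root at $x=1$, it follows the paper: compare the first four rows directly with Pascal's triangle, then use $q^{(n)}(1)=||\T_{n,*}||-2^n=0$ from Proposition~\ref{summationpowertwo}.

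Where you genuinely differ is the exact degree. The paper justifies $\deg q^{(n)}=n-1$ by noting that the first two columns of $\T$ (entries $1$ and $n$) coincide with those of Pascal's triangle. Those columns are the coefficients of $x^0$ and $x^1$, so that observation only shows that $x^2$ divides $q^{(n)}$. It says nothing about the top of the polynomial.

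Your argument is what actually pins the degree down. The $x^n$ terms cancel because $\T_{n,n+1}=1$, and the $x^{n-1}$ coefficient $\T_{n,n}-n=5\cdot 2^{n-4}-n$ is strictly positive for $n\geq 4$. This costs only the closed form $\T_{n,n}=5\cdot 2^{n-4}$, or equivalently $\T_{4,4}=5$ together with the doubling relation $\T_{n,n}=2\T_{n-1,n-1}$ from the proof of Lemma~\ref{mainlemmaone}.

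The two observations combine: for $n\geq 4$ one gets $q^{(n)}(x)=x^2(x-1)r_n(x)$ with $r_n$ nonzero of degree $n-4$. For example, $q^{(4)}(x)=x^3-x^2$ and $q^{(5)}(x)=x^2(x-1)(5x+3)$.
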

\begin{proof}
Recall that $(x+1)^n$ has binomial coefficients for every $n\geq0$. The first statement is trivial by direct calculation of $\T_{n,*}$ for $n\in\{0,1,2,3\}$ and comparing the sequences with the first four rows of Pascal's triangle. For the second statement, since  $\T_{2,2}$ = 2 and $\T_{n,j} = \T_{n-1,j} + \psi(j)$ for every $n \geq 1$ and $1 \leq j \leq n-1$, where $\psi(1) = 0$ and $\psi(2) = 1$, the first two columns of $\T$ must equal the first two columns of Pascal's triangle for every $n\geq 0$. Therefore, the degree of $q^{(n)}$ must be $n-1$ for $n\geq4$. The second part follows directly from Proposition \ref{summationpowertwo} since any row of Pascal's triangle sums to $2^n$ for every $n\geq 0$, hence, the coefficients of $q^{(n)}$ must sum to zero. The final statement follows immediately.
\end{proof}
\begin{rem}
The first 5 rows of our proposed triangular array shares the same values with sequence A046688 in OEIS; an array wherein the $n$th row is an arithmetic progression of difference $2^{n-1}$ for $n > 0$. 
\end{rem}
\begin{rem}
The first 18 values of our proposed sequence shares the same values with the sequence formed by the 2nd to 19th values of A208342 in OEIS.
\end{rem}
Since we have an explicit formula for $||\T_{n,*}||$ for every $n\geq 0$ from Proposition \ref{summationpowertwo}, we can scale $\T$ to achieve a new triangular array $\T^{(y)}$ with a chosen sum progression, through
\begin{align}
\T^{(y)}_{n,j} \triangleq \left(\frac{y}{2}\right)^n\T_{n,j} \label{scaledarray}
\end{align}
for any finite $y\in\R_+\setminus\{0\}$ and every $n\geq 0$ and $1 \leq j \leq n+1$.
\begin{coro}
\label{scaledarrayprogressopn}
Let $\T^{(y)}$ be defined as in (\ref{scaledarray}) for any finite $y\in\R_+\setminus\{0\}$. Then, 
\begin{enumerate}
\item $\T^{(y)}_{n,*}$ is strictly-positive \emph{near}-log-concave of degree 1 with $\mathcal{I}_{n}^{(1)} = \mathcal{I}_{n}\setminus\{4\}$ for every $n\geq 5$ \item $||\T^{(y)}_{n,*}|| = y^{n}$ for every $n\geq 0$
\end{enumerate}
\end{coro}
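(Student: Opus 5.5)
The plan is to reduce both parts of Corollary \ref{scaledarrayprogressopn} to earlier results. The observation that makes this work is that the scaling in (\ref{scaledarray}) multiplies the entire $n$th row by one positive constant $c_n = (y/2)^n$. This constant depends on $n$ but not on the column index $j$.

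For the first part, fix $n\geq 5$. Since $y>0$, we have $c_n>0$, so every $\T^{(y)}_{n,j} = c_n\T_{n,j}$ is strictly positive, because each $\T_{n,j}$ is. For any interior index $i\in\{2,\ldots,n\}$ the log-concavity expression rescales homogeneously:
\begin{align*}
\left(\T^{(y)}_{n,i}\right)^2 - \T^{(y)}_{n,i-1}\T^{(y)}_{n,i+1} = c_n^2\left(\T^2_{n,i} - \T_{n,i-1}\T_{n,i+1}\right).
\end{align*}
Because $c_n^2>0$, the sign of each log-concavity test is the same for $\T^{(y)}_{n,*}$ as for $\T_{n,*}$. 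Proposition \ref{propnearlogconcave} then applies directly. The inequality holds exactly for $i\in\mathcal{I}_n\setminus\{4\}$ and fails at $i=4$. So $\T^{(y)}_{n,*}$ is near-log-concave of degree 1 with the same index set $\mathcal{I}_n^{(1)}=\mathcal{I}_n\setminus\{4\}$, which has cardinality $n-2$, as Definition \ref{nearlogconcave} requires.

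For the second part, linearity of the sum gives $||\T^{(y)}_{n,*}|| = c_n||\T_{n,*}||$. Proposition \ref{summationpowertwo} gives $||\T_{n,*}|| = 2^n$, so the sum equals $(y/2)^n 2^n = y^n$ for every $n\geq 0$. Since all entries are positive, this sum is also the $\mathcal{L}_1$-norm.

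No step here is a genuine obstacle. The only point needing care is that the claim is the failure set is \emph{exactly} $\{4\}$, not merely contained in it. This follows from the equivalence of signs shown above, which needs $c_n\neq 0$, and that is guaranteed by excluding $y=0$.
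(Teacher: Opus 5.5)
Your proposal is correct and takes the same route as the paper. The paper's proof simply cites Proposition \ref{propnearlogconcave} and Proposition \ref{summationpowertwo}; you spell out the step it leaves implicit, namely that the row factor $(y/2)^n>0$ multiplies each test $\T^2_{n,i}-\T_{n,i-1}\T_{n,i+1}$ by a positive constant and turns the row sum $2^n$ into $y^n$. One small remark: Definition \ref{nearlogconcave} only requires the inequality on $\mathcal{I}_n^{(1)}$, not its failure at $i=4$, so your ``exactly'' caveat is extra information rather than a needed step, though your sign-equivalence argument covers it anyway.
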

\begin{proof}
The statement follows from Proposition \ref{propnearlogconcave} and Proposition \ref{summationpowertwo} for finite $y\in\R_+\setminus\{0\}$.
\end{proof}
If we choose $y=2$, we have $\T^{(2)} = \T$. If we choose $y=1$, then Corollary \ref{scaledarrayprogressopn} leads us to a probabilistic setup, where each $\T^{(1)}_{n,*}$ defines a probability mass function for every $n\geq 0$, since all elements of $\T^{(1)}_{n,*}$ are strictly-positive and $||\T^{(1)}_{n,*}|| = 1$. 
\begin{prop}
\label{propforTprobproperties}
The following properties hold for $\T^{(1)}$:
\begin{enumerate}
\item For every $n\geq 5$ and any $k\geq 0$, $\sup(\T^{(1)}_{n,*}) = \sup(\T^{(1)}_{n+k,*})$.
\item For every $j\geq 2$ and any $k\geq 1$, $\sup([\T^{(1)}_{j-1,j},\ldots,\T^{(1)}_{j-1+k,j}]) = \T^{(1)}_{j,j}$.
\end{enumerate}
\end{prop}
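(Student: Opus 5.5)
The plan is to work directly from the normalisation $\T^{(1)}_{n,j}=2^{-n}\T_{n,j}$ in (\ref{scaledarray}), together with two facts already established: $\sup(\T_{n,*})=\T_{n,n}$, and the diagonal doubling (\ref{remakeofdiagonals}). Both items then reduce to comparing consecutive powers of $2$ with ratios of entries of $\T$.

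For the first item, I would note that the supremum of a row is unchanged by multiplying the row by a positive constant. Hence $\sup(\T_{n,*})=\T_{n,n}$ gives $\sup(\T^{(1)}_{n,*})=2^{-n}\T_{n,n}$. By (\ref{remakeofdiagonals}), $\T_{n,n}=2\T_{n-1,n-1}$ for every $n\geq 5$, so $2^{-n}\T_{n,n}=2^{-(n-1)}\T_{n-1,n-1}$. Iterating from $n=5$, this quantity equals $2^{-5}\T_{5,5}=10/32$ for all $n\geq 5$. The claim for any $k\geq 0$ follows immediately.

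For the second item, fix $j\geq 2$ and look at the column entries $\T^{(1)}_{m,j}=2^{-m}\T_{m,j}$ for $m=j-1,\ldots,j-1+k$. I would split these into three regimes.
\begin{enumerate}
\item At $m=j-1$ the entry lies on the boundary, so $\T_{j-1,j}=1$ and $\T^{(1)}_{j-1,j}=2^{-(j-1)}$.
\item At $m=j$ we get $\T^{(1)}_{j,j}=2^{-j}\T_{j,j}$. This dominates the previous entry exactly when $\T_{j,j}\geq 2$. That holds because $\T_{2,2}=2$, $\T_{3,3}=3$ and the diagonal is nondecreasing (it is a sum of earlier diagonal terms, and doubles from $n=5$).
\item For $m\geq j+1$ we have $j\leq m-1$, so the row recursion $\T_{m,j}=\T_{m-1,j}+\psi(j)$ applies. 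The ratio bound (\ref{ratiorows}) then gives $\T_{m,j}<2\T_{m-1,j}$, that is, $\T^{(1)}_{m,j}<\T^{(1)}_{m-1,j}$. So the scaled column is strictly decreasing from row $j$ onwards, and its supremum over the window is $\T^{(1)}_{j,j}$ for every $k\geq 1$.
\end{enumerate}

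The step needing care, and which I expect to be the main obstacle, is the validity of (\ref{ratiorows}) along the whole column. That bound rests on $\psi(j)<\T_{m-1,j}$, which the paper asserts rather than proves. I would verify it explicitly as follows. Since columns are nondecreasing in $m$ (because $\psi\geq 0$), it suffices to check $\psi(j)<\T_{j,j}$. The small cases are immediate: $\psi$ takes the values $1,2,3$ at $j=2,3,4$, while the diagonal takes $2,3,5$. For $j\geq 5$, both $\psi(j)$ (by (\ref{columnprogressiontwo})) and $\T_{j,j}$ (by (\ref{remakeofdiagonals})) double at each step. Starting from $\psi(5)=6<10=\T_{5,5}$, the strict inequality therefore persists for all larger $j$. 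The remaining step is to make sure the comparison at $m=j$ is not strict-sensitive: at $j=2$ we get equality $\T^{(1)}_{2,2}=\T^{(1)}_{1,2}=1/2$, which still gives the stated supremum.
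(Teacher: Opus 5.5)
Your proof is correct and follows the paper's argument: item 1 uses the constancy of the scaled diagonal $2^{-n}\T_{n,n}$ for $n\geq 5$ (the paper states this as the $j=n$ case of $\T^{(1)}_{n,j}=\T^{(1)}_{n-1,j-1}$), and item 2 uses (\ref{ratiorows}) after scaling by $2^{-m}$. Your explicit check of the first comparison $\T^{(1)}_{j-1,j}\leq\T^{(1)}_{j,j}$ (with equality at $j=2$), and your verification of the hypothesis $\psi(j)<\T_{m-1,j}$ behind (\ref{ratiorows}), supply details the paper leaves implicit.
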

\begin{proof}
Using (\ref{scaledarray}) with $y=1$ and (\ref{maindiagonalrelation}), we have $\T^{(1)}_{n-1,j-1} = \T^{(1)}_{n,j}$ for every $n\geq 5$ and $5 \leq j \leq n$. Hence, having $\sup(\T^{(1)}_{n,*})=\T^{(1)}_{n,n}$ for every $n\geq 5$, the first statement follows. The second statement follows from (\ref{ratiorows}) and (\ref{scaledarray}) with $y=1$.
\end{proof}
Any $\T^{(y)}$ for any finite $y\in\R_+\setminus\{0\}$ can be viewed as an infinite-dimensional lower-triangular matrix. An algebraic study of $\T^{(y)}$ is beyond the scope of this paper, but we shall highlight the following. 
\begin{rem}
\label{transitionmatrix}
Note that $\T^{(1)}$ defines a lower-triangular right-stochastic matrix and provides the transition probabilities of a Markov chain with infinite states.
\end{rem}
\begin{prop}
Let $\tau^{(1)}$ be defined by $\tau^{(1)}_{n,j} \triangleq \T^{(1)}_{n + 3, j + 3}$ for every $n \geq 0$ and $1 \leq j \leq n +1$. Then, 
\begin{enumerate}
\item $\tau^{(1)}_{n,*}$ is a strictly-positive log-concave sequence for every $n\geq 2$
\item $\tau^{(1)}$ generates a lower-triangular Toeplitz matrix by setting $\emph{diag}(\tau^{(1)}) = 0$  
\end{enumerate}
\end{prop}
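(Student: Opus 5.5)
The plan is to reduce both claims to facts already established for $\tau$ and $\T$, using that the normalisation in (\ref{scaledarray}) with $y=1$ multiplies each row by a single positive scalar.

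For the first statement I would write $\tau^{(1)}_{n,j} = 2^{-(n+3)}\tau_{n,j}$. This holds because $\tau^{(1)}_{n,j}=\T^{(1)}_{n+3,j+3} = 2^{-(n+3)}\T_{n+3,j+3}$. Strict positivity is inherited from $\tau$. For every $2\le j\le n$, the log-concavity inequality satisfies
\begin{align*}
\tau^{(1)}_{n,j-1}\tau^{(1)}_{n,j+1} - (\tau^{(1)}_{n,j})^2 = 2^{-2(n+3)}\left(\tau_{n,j-1}\tau_{n,j+1}-\tau_{n,j}^2\right),
\end{align*}
so its sign is unchanged by the scaling. Lemma \ref{mainlemmaone} then gives log-concavity of $\tau^{(1)}_{n,*}$ for every $n\ge 2$. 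Equivalently, this is part 1 of Corollary \ref{scaledarrayprogressopn} restricted to the columns $j\ge 4$, where no failure of log-concavity occurs.

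For the second statement I would read ``setting $\mathrm{diag}(\tau^{(1)})=0$'' as replacing the entries $\tau^{(1)}_{n,n+1}=2^{-(n+3)}$ by zero, giving a strictly lower-triangular array. The key identity is (\ref{maindiagonalrelation}), i.e. $\tau_{n,j}=2\tau_{n-1,j-1}$ for $n\ge2$ and $2\le j\le n$. Dividing by $2^{n+3}$ gives
\begin{align*}
\tau^{(1)}_{n,j} = \tau^{(1)}_{n-1,j-1},
\end{align*}
which is exactly the invariance along subdiagonals, as already noted in the proof of Proposition \ref{propforTprobproperties}. Consequently the off-diagonal entry $\tau^{(1)}_{n,j}$ depends only on $n-j$. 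Iterating down to column $1$, it equals $\tau^{(1)}_{n-j+1,1} = 2^{-(n-j+4)}\T_{n-j+4,4} = 2^{-(n-j+4)}(5+3(n-j))$, using (\ref{algorithmentire}). The diagonal is the one place where Toeplitz structure fails, since the entries $2^{-(n+3)}$ vary with $n$ ($\alpha_{n+1}(n)=1$ rather than $2$). Zeroing the diagonal removes this defect, and the resulting matrix $A_{n,j} = c_{n-j}$ with $c_0=0$ is Toeplitz.

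The main obstacle is the indexing at the boundary. I need to check that (\ref{maindiagonalrelation}) holds for all entries strictly below the diagonal, including the small rows $n=1,2$, where the lemma is stated only for $n\ge2$. I would handle these by the explicit values (\ref{initialtauseq}): $\tau_{1,1}=5$, $\tau_{2,2}=10=2\cdot5$, and $\tau_{3,2}=16=2\cdot 8$, $\tau_{3,3}=20=2\cdot10$. If needed, I would state the Toeplitz matrix with the diagonal entries at index $n+1$ of row $n$ removed, matching the row/column convention $1\le j\le n+1$.
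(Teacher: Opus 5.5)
Your proposal is correct and follows the paper's own route. Part 1 uses the fact that the positive row scaling $2^{-(n+3)}$ preserves log-concavity, combined with Lemma \ref{mainlemmaone}. Part 2 divides (\ref{maindiagonalrelation}) by $2^{n+3}$ to get $\tau^{(1)}_{n,j}=\tau^{(1)}_{n-1,j-1}$ below the diagonal, so only the diagonal entries $2^{-(n+3)}$ break the Toeplitz structure.

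Two cosmetic points:
\begin{itemize}
\item With rows indexed from $n=0$ and columns from $j=1$, the zeroed diagonal sits at $n-j=-1$. So the condition is $c_{-1}=0$, or $c_0=0$ only after shifting the row index.
\item Your extra check of the small rows is unnecessary. The relation $\tau_{n,j}=2\tau_{n-1,j-1}$ for $n\ge 2$, $2\le j\le n$ already covers every sub-diagonal entry with $j\ge 2$.
\end{itemize}
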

\begin{proof}
The first statement follows from Lemma \ref{mainlemmaone} and (\ref{scaledarray}) with $y=1$. The second statement follows from (\ref{scaledarray}) and (\ref{maindiagonalrelation}).
\end{proof}
Hence, $\T^{(1)}$ hosts an infinite Toeplitz matrix, where each row is a log-concave sequence. Note also that $\tau^{(1)}$, when its diagonals are set to zero, is the Toeplitz matrix of the column sequence $[\tau^{(1)}_{1,1},\tau^{(1)}_{2,1},\ldots]$. 
\begin{rem}
Note that $[\tau^{(1)}_{1,1},\tau^{(1)}_{2,1},\ldots]$ is itself a log-concave sequence, hence, it is a Pólya frequency sequence of order 2, which means the given Toeplitz matrix must be totally non-negative of order 2. 
\end{rem}
We shall now generalise Proposition \ref{logconcavecorrection} for any finite $y\in\R_+\setminus\{0\}$.
\begin{prop}
\label{logconcavecorrectionnormalised}
Let $\Theta$ be as in (\ref{differentialmetricfunctionsummation}). Then, $\Theta(\T^{(y)}_{n,*}) \rightarrow \log(4/3)$ as $n \rightarrow \infty$ for any finite $y\in\R_+\setminus\{0\}$.
\end{prop}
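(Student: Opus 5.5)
The plan is to reduce the statement to Proposition \ref{logconcavecorrection} by observing that $\Theta$ is invariant under multiplying a whole sequence by a positive constant. Fix a finite $y\in\R_+\setminus\{0\}$ and $n\geq 2$. By (\ref{scaledarray}), $\T^{(y)}_{n,j} = c_n \T_{n,j}$ with $c_n = (y/2)^n > 0$, which does not depend on $j$. Hence every element of $\T^{(y)}_{n,*}$ is strictly positive, so $\min(\T^{(y)}_{n,*})>0$ and $\Theta(\T^{(y)}_{n,*})$ is well defined.

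The key step is the computation, for each $i\in\{2,\ldots,n\}$,
\begin{align*}
\log(\T^{(y)}_{n,i-1}) + \log(\T^{(y)}_{n,i+1}) - 2\log(\T^{(y)}_{n,i}) = \log(\T_{n,i-1}) + \log(\T_{n,i+1}) - 2\log(\T_{n,i}).
\end{align*}
This holds because the three copies of $\log(c_n)$ enter with coefficients $1+1-2=0$. Applying $\max(\cdot,0)$ in (\ref{differentialmetricfunction}) gives $\Theta_i(\T^{(y)}_{n,*}) = \Theta_i(\T_{n,*})$ for every $i$. Summing over $i$ as in (\ref{differentialmetricfunctionsummation}) then gives $\Theta(\T^{(y)}_{n,*}) = \Theta(\T_{n,*})$ for every $n\geq 2$.

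The sequences $\{\Theta(\T^{(y)}_{n,*})\}_{n\geq 2}$ and $\{\Theta(\T_{n,*})\}_{n\geq 2}$ are therefore identical. By Proposition \ref{logconcavecorrection}, the latter converges to $\log(4/3)$, and so does the former.

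None of these steps is a real obstacle. The only point needing care is that the scaling factor $c_n$ may vary with $n$ and may be less than one, since $y<2$ is allowed. Neither matters: the argument is carried out row by row, and it uses only $c_n>0$. Alternatively, one could redo the limit in the proof of Proposition \ref{logconcavecorrection} with $g_j$ replaced by $(y/2)^n g_j$, but the invariance argument above is shorter.
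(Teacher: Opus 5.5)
Your proof is correct and follows essentially the same route as the paper: the $\log((y/2)^n)$ terms cancel inside $\Theta_i$, and Proposition \ref{logconcavecorrection} then gives the limit. The only difference is that the paper first uses Corollary \ref{scaledarrayprogressopn} to discard every $\Theta_i$ with $i\neq 4$ before cancelling, whereas you cancel for every $i$ and get the exact identity $\Theta(\T^{(y)}_{n,*}) = \Theta(\T_{n,*})$ for all $n\geq 2$, which is slightly cleaner and makes that corollary unnecessary.
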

\begin{proof}
Using Corollary \ref{scaledarrayprogressopn}, we have $\Theta_{i}\left( \T^{(y)}_{n,*} \right) = 0$ for every $i \in \mathcal{I}_{n}^{(1)}$, for every $n\geq 5$, which implies
\begin{align}
\sum_{i \in \mathcal{I}_{n}^{(1)}} \Theta_{i}( \T^{(y)}_{n,*} ) = 0 \hspace{0.1in} \text{$\forall n\geq 5$} \hspace{0.1in} \Rightarrow \lim_{n \rightarrow \infty}\Theta(\T^{(y)}_{n,*}) = \lim_{n \rightarrow \infty} \Theta_{4}(\T^{(y)}_{n,*}), \label{allzeroselsewherenormalised}
\end{align}
for any finite $y\in\R_+\setminus\{0\}$. Define $g_3$, $g_4(x)$  and $g_5$ as in Proposition \ref{logconcavecorrection}, so that
\begin{align}
\lim_{n \rightarrow \infty} \left(\log(\T^{(y)}_{n,3}) + \log(\T^{(y)}_{n,5}) - 2\log(\T^{(y)}_{n,4})\right) = \lim_{n \rightarrow \infty} \left(\log(\T_{n,3}) + \log(\T_{n,5}) - 2\log(\T_{n,4})\right), \label{mainlimitconvergencenumnormalised}
\end{align}
since all $(y/2)^n > 0$ terms in (\ref{mainlimitconvergencenumnormalised}) cancel out simply due to $\log(cr) = \log(c) + \log(r)$ for any $c > 0$ and $r>0$. The statement thus follows from Proposition \ref{logconcavecorrection}.
\end{proof}
From Proposition \ref{logconcavecorrectionnormalised}, we see that the log-concavity deviation of $\T^{(y)}$ converges to $\log(4/3)$ asymptotically, for any finite $y\in\R_+\setminus\{0\}$ -- as such Proposition \ref{logconcavecorrection} can be viewed as a corollary to Proposition \ref{logconcavecorrectionnormalised} with $\T^{(2)} = \T$. In addition, similar to Proposition \ref{asymptoticproposition}, define a sequence given by the product: 
\begin{align}
S^{(y)}_n = \prod_{j=1}^{n+1} \T^{(y)}_{n,j} \notag 
\end{align}
for every $n\geq 0$ and any finite $y\in\R_+\setminus\{0\}$, and generate a sequence of ratios:
\begin{align}
\phi^{(y)}_n = \frac{S^{(y)}_{n-1}S^{(y)}_{n+1}}{(S^{(y)}_n)^2} \label{producsequencegeneral}
\end{align}
for every $n\geq 1$. We can now generalise Proposition \ref{asymptoticproposition} as follows.
\begin{prop}
\label{asymptitcgeneraltvalue}
Let $\phi^{(y)}_n$ be as in (\ref{producsequencegeneral}) for every $n\geq 1$. Then, for any finite $y\in\R_+\setminus\{0\}$,
\begin{align}
\phi^{(y)}_n \rightarrow \frac{y^2}{2} \,\,\, \text{as} \,\,\, n \rightarrow \infty. \notag
\end{align}
\end{prop}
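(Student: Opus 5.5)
The plan is to reduce the claim to Proposition \ref{asymptoticproposition} by pulling the scaling factor $(y/2)^n$ out of every entry of the products. First, by (\ref{scaledarray}), each entry of the $n$th row of $\T^{(y)}$ carries the same factor $(y/2)^n$, and the $n$th row has $n+1$ entries. Hence
\begin{align}
S^{(y)}_n = \prod_{j=1}^{n+1} \left(\frac{y}{2}\right)^n \T_{n,j} = \left(\frac{y}{2}\right)^{n(n+1)} S_n \notag
\end{align}
for every $n\geq 0$. Since every $\T_{n,j}>0$ and $y>0$, all of these quantities are strictly positive, so the ratio $\phi^{(y)}_n$ in (\ref{producsequencegeneral}) is well defined.

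Second, substituting into (\ref{producsequencegeneral}) gives
\begin{align}
\phi^{(y)}_n = \left(\frac{y}{2}\right)^{(n-1)n + (n+1)(n+2) - 2n(n+1)} \frac{S_{n-1}S_{n+1}}{(S_n)^2}. \notag
\end{align}
The key bookkeeping step is the exponent. Expanding, we get $(n^2-n) + (n^2+3n+2) - (2n^2+2n) = 2$, independent of $n$. Therefore
\begin{align}
\phi^{(y)}_n = \left(\frac{y}{2}\right)^{2}\phi_n = \frac{y^2}{4}\,\phi_n \notag
\end{align}
for every $n\geq 1$, with $\phi_n$ as in (\ref{producsequence}).

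Third, Proposition \ref{asymptoticproposition} gives $\phi_n\rightarrow 2$. Multiplying by the constant $y^2/4$ yields $\phi^{(y)}_n\rightarrow y^2/2$, as claimed. As a consistency check, $y=2$ recovers the limit $2$ of Proposition \ref{asymptoticproposition}, matching $\T^{(2)}=\T$.

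The only place requiring care is the exponent computation, since the row lengths $n$, $n+1$, $n+2$ differ across the three rows. The argument relies on the quadratic terms cancelling exactly, which the expansion above confirms. No new asymptotic analysis is needed beyond what Proposition \ref{asymptoticproposition} already provides.
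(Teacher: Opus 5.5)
Your proof is correct and uses the same idea as the paper: pull the common row factor $(y/2)^n$ out of the products, observe that the exponents cancel to leave exactly $(y/2)^2 = y^2/4$, and then invoke Proposition \ref{asymptoticproposition}. The paper does this bookkeeping separately on its $W^{(y)}$ pieces (via $3(k+5)+5(k+7)-8(k+6)=2$) and on its $s^{(y,\cdot)}$ pieces (where the factors cancel), whereas your whole-row identity $S^{(y)}_n=(y/2)^{n(n+1)}S_n$ yields $\phi^{(y)}_n=\tfrac{y^2}{4}\phi_n$ for all $n\geq 1$ more directly.
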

\begin{proof}
As in Proposition \ref{asymptoticproposition}, define the following triplets from $\T^{(y)}$:
\begin{align}
s^{(y,4)}_{n-1} = \prod_{j=4}^{n} \left(\frac{y}{2} \right)^{n-1}\T_{n-1,j} \hspace{0.1in} \text{and} \hspace{0.1in} s^{(y,5)}_{n} = \prod_{j=5}^{n+1} \left(\frac{y}{2} \right)^n\T_{n,j} \hspace{0.1in} \text{and} \hspace{0.1in} s^{(y,6)}_{n+1} = \prod_{j=6}^{n+2} \left(\frac{y}{2} \right)^{n+1}\T_{n+1,j}\notag
\end{align}
for $n\geq 6$ and any finite $y\in\R_+\setminus\{0\}$. Using (\ref{maindiagonalrelation}) from Lemma \ref{mainlemmaone}, we must have $(s^{(y,4)}_{n-1}s^{(y,6)}_{n+1})/(s^{(y,5)}_n)^2 = 1$ for every $n\geq 6$ and finite $y\in\R_+\setminus\{0\}$. Thus, similar to Proposition \ref{asymptoticproposition}, define scaled functions as follows:
\begin{align}
&W^{(y)}_4(k) = \left(\frac{y}{2} \right)^{k+5}(5+k)\left(\frac{y}{2} \right)^{k+5}(7+2k)\left(\frac{y}{2} \right)^{k+5}  \notag \\
&W^{(y)}_5(k) = \left(\frac{y}{2} \right)^{k+6}(5+k+1)\left(\frac{y}{2} \right)^{k+6}(7+2(k+1))\left(\frac{y}{2} \right)^{k+6}(8+3(k+1))\left(\frac{y}{2} \right)^{k+6}\notag \\
&W^{(y)}_6(k) = \left(\frac{y}{2} \right)^{k+7}(5+k+2)\left(\frac{y}{2} \right)^{k+7}(7+2(k+2))\left(\frac{y}{2} \right)^{k+7}(8+3(k+2))\left(\frac{y}{2} \right)^{k+7}(16+6(k+1))\left(\frac{y}{2} \right)^{k+7} \notag
\end{align}
for every $k\in\Z_+$, so that
\begin{align}
W^{(y)}_4(k) = \prod_{j=1}^{3} \left(\frac{y}{2} \right)^{k+5}\T_{k+5,j} \hspace{0.1in} \text{and} \hspace{0.1in} W^{(y)}_5(k) = \prod_{j=1}^{4} \left(\frac{y}{2} \right)^{k+6}\T_{k+6,j} \hspace{0.1in} \text{and} \hspace{0.1in} W^{(y)}_6(k) = \prod_{j=1}^{5} \left(\frac{y}{2} \right)^{k+7}\T_{k+7,j}, \notag
\end{align}
for every $k\in\Z_+$. Since we have
\begin{align}
\frac{\left(\frac{y}{2}\right)^{3(k+5)}\left(\frac{y}{2}\right)^{5(k+7)}}{\left(\frac{y}{2}\right)^{8(k+6))}} = \frac{y^2}{4}, \notag
\end{align}
we must have the following:
\begin{align}
W^{(y,*)}(k) = \frac{W^{(y)}_4(k)W^{(y)}_6(k)}{(W^{(y)}_5(k))^2} = \left(\frac{y^2}{4}\right)\frac{W_4(k)W_6(k)}{(W_5(k))^2} = \left( \frac{y^2}{4}\right)W^{(*)}(k) \hspace{0.1in} \forall k\in\Z_+,
\end{align}
for any finite $y\in\R_+\setminus\{0\}$. From Proposition \ref{asymptoticproposition}, we know $W^{(*)}(k) \rightarrow 2$ as $k\rightarrow \infty$, hence, $W^{(y,*)}(k) \rightarrow y^2/2$ as $k\rightarrow \infty$. Finally, since
\begin{align}
S^{(y)}_{n-1} = W^{(y)}_4(n-6)s^{(y,4)}_{n-1} \hspace{0.1in} \text{and} \hspace{0.1in} S^{(y)}_{n} = W^{(y)}_5(n-6)s^{(y,5)}_{n} \hspace{0.1in} \text{and} \hspace{0.1in} S^{(y)}_{n+1} = W^{(y)}_6(n-6)s^{(y,6)}_{n+1}, \notag
\end{align}
for every $n\geq 6$, the statement follows.
\end{proof}
Since $\T^{(2)} = \T$, we have $\phi^{(2)}_n = \phi_n \rightarrow 2$ as $n\rightarrow \infty$. In addition, from Proposition \ref{asymptitcgeneraltvalue}, we can see that for the probabilistic construct $\T^{(1)}$ we have $\phi^{(1)}_n \rightarrow 1/2$ as $n\rightarrow \infty$. 
\begin{rem}
For any $0 < y < \sqrt{2}$, the sequence formed by $\{S^{(y)}_n:n\geq 0\}$ achieves log-concavity asymptotically. This follows since $\T^{(\sqrt{2})}$, with $y=\sqrt{2}$, produces the asymptote 
\begin{align}
\phi^{(\sqrt{2})}_n \rightarrow 1 \,\,\, \text{as} \,\,\, n\rightarrow \infty, \notag
\end{align}
from above, converging to the boundary of log-convexity.
\end{rem}
Hereafter, we shall focus on $\T^{(1)}$ and work on a probability space $(\Omega,\F,\P)$. We shall now introduce a new family of random variables.
\begin{defn}
\label{nearlogconcaverandomvariable}
Let $\mathbf{D}$ be an ordered finite subset of $\R$ with cardinality $|\mathbf{D}|\geq 3$, and $X : \Omega \rightarrow \mathbf{D}$ be a random variable with probability mass function 
\begin{align}
\P\left(X = d_j\right) = P_{X}(d_j), \notag 
\end{align}
for every $d_j \in \mathbf{D}$. The random variable $X$ is called \emph{near}-log-concave of degree $r\in \Z_+$ for finite $0 \leq r \leq |\mathbf{D}|-3$, if 
\begin{align} 
P_{X}(d_{j-1})P_{X}(d_{j+1}) \leq P_{X}(d_j)^2 \hspace{0.15in} \text{$\forall j\in\mathcal{I}_{|\mathbf{D}|}^{(r)}$}, \notag
\end{align}
where $\mathcal{I}_{|\mathbf{D}|}^{(r)} \subseteq \mathcal{I}_{|\mathbf{D}|} = \{2,\ldots,|\mathbf{D}|\}$ such that 
\begin{align}
|\mathcal{I}_{|\mathbf{D}|}^{(r)}| = |\mathbf{D}| - r - 2. \notag
\end{align}
\end{defn}
From Definition \ref{nearlogconcaverandomvariable}, we see that a \emph{near}-log-concave random variable of degree 0 is a log-concave random variable of \cite{1,6a}. 
%Using log-concavity deviation measures, one can quantify how far a \emph{near}-log-concave random variable is from being a log-concave random variable. 
\begin{prop}
\label{nearlogconcaverandomvariableprop}
Let $\mathbf{D}^{(n)}$ be an ordered finite subset of $\R$ with cardinality $|\mathbf{D}^{(n)}| = n+1$ for any $n \geq 2$, and $X^{(n)} : \Omega \rightarrow \mathbf{D}^{(n)}$ be a random variable with probability mass function
\begin{align}
\P\left(X^{(n)} = d^{(n)}_j\right) = \T^{(1)}_{n,j} \notag
\end{align}
for every $d^{(n)}_j \in \mathbf{D}^{(n)}$. Then, the following hold:
\begin{enumerate}
\item $X^{(2)}$, $X^{(3)}$ and $X^{(4)}$ are log-concave random variables with $\mathcal{I}_{|\mathbf{D}^{(n)}|}^{(0)} = \mathcal{I}_{|\mathbf{D}^{(n)}|}$ for $n=2,3,4$
\item $X^{(n)}$ is a \emph{near}-log-concave random variable of degree \emph{1} with $\mathcal{I}_{|\mathbf{D}^{(n)}|}^{(1)} = \mathcal{I}_{|\mathbf{D}^{(n)}|}\setminus\{4\}$ for every $n\geq 5$
\end{enumerate}
given that $\mathcal{I}_{|\mathbf{D}^{(n)}|} = \{2,\ldots,|\mathbf{D}^{(n)}|-1\}$ for every finite $n\geq 2$.
\end{prop}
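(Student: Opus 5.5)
The plan is to reduce the statement to the corresponding facts already established for the sequences $\T^{(1)}_{n,*}$. The key observation is that the probability mass function of $X^{(n)}$ is, by definition, the row $\T^{(1)}_{n,*} = [\T^{(1)}_{n,1},\ldots,\T^{(1)}_{n,n+1}]$ listed in the order of $\mathbf{D}^{(n)}$. Hence the inequalities required in Definition \ref{nearlogconcaverandomvariable} are exactly
\begin{align}
\T^{(1)}_{n,j-1}\T^{(1)}_{n,j+1} \leq \left(\T^{(1)}_{n,j}\right)^2, \notag
\end{align}
for $j$ in the prescribed index set. The first step is therefore a translation of notation. With $|\mathbf{D}^{(n)}| = n+1$, the interior index set $\{2,\ldots,|\mathbf{D}^{(n)}|-1\}$ equals $\mathcal{I}_n=\{2,\ldots,n\}$ from Definition \ref{nearlogconcave}. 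The cardinality requirement $|\mathbf{D}^{(n)}|-r-2 = n-r-1$ also matches the sequence definition. So near-log-concavity of degree $r$ of $X^{(n)}$ is equivalent to near-log-concavity of degree $r$ of the sequence $\T^{(1)}_{n,*}$, with the same index set.

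The second step is to note that multiplying a row by the positive constant $(1/2)^n$ leaves each inequality $\eta_{j-1}\eta_{j+1}\le \eta_j^2$ unchanged, since both sides scale by $2^{-2n}$. Consequently, every log-concavity statement about $\T_{n,*}$ transfers verbatim to $\T^{(1)}_{n,*}$.

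For part 1, I will use the explicit rows $\T_{2,*}=[1,2,1]$, $\T_{3,*}=[1,3,3,1]$ and $\T_{4,*}=[1,4,5,4,1]$ obtained from (\ref{algorithmentire}). The paper already notes that these are near-log-concave of degree $0$, so checking the few inequalities directly (e.g.\ $1\cdot 5\le 16$ and $4\cdot4\le 25$) suffices. Combined with the scaling invariance, this shows that $X^{(2)},X^{(3)},X^{(4)}$ are log-concave, with the full index set.

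For part 2, I will invoke Corollary \ref{scaledarrayprogressopn}(1) with $y=1$, or equivalently Proposition \ref{propnearlogconcave} plus scaling invariance. This gives that $\T^{(1)}_{n,*}$ is near-log-concave of degree $1$ with index set $\mathcal{I}_n\setminus\{4\}$ for all $n\ge5$. The translation from the first step then yields the claim for $X^{(n)}$.

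The only real obstacle is bookkeeping rather than mathematics. Definition \ref{nearlogconcaverandomvariable} writes $\mathcal{I}_{|\mathbf{D}|}=\{2,\ldots,|\mathbf{D}|\}$, whereas the proposition uses $\{2,\ldots,|\mathbf{D}|-1\}$. I will adopt the proposition's convention, which is the one under which $d_{j+1}$ exists and under which the cardinality count $|\mathbf{D}|-r-2$ is consistent. I will then verify that removing the single index $4$ gives exactly $n-2 = |\mathbf{D}^{(n)}|-1-2$ indices, as required for degree $1$.
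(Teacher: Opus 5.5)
Your proposal is correct and follows the paper's route: a direct check of the rows for $n=2,3,4$, and for $n\geq 5$ an appeal to Corollary \ref{scaledarrayprogressopn} with $y=1$, with the index sets and cardinalities of the two definitions matched as you describe. One arithmetic slip: by (\ref{algorithmentire}), or from $\T_{4,4}=\T_{3,3}+\T_{2,2}=5$, the fourth row is $\T_{4,*}=[1,4,5,5,1]$ (summing to $16=2^4$), not $[1,4,5,4,1]$, so the checks are $1\cdot 5\leq 16$, $4\cdot 5\leq 25$ and $5\cdot 1\leq 25$, which still hold and leave your conclusion intact.
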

\begin{proof}
The first statement can be shown by direct calculation. The second statement follows from Corollary \ref{scaledarrayprogressopn} and Definition \ref{nearlogconcaverandomvariable}.
\end{proof}
\begin{rem}
\label{probmeasure}
In Proposition \ref{nearlogconcaverandomvariableprop}, each $\mathbf{D}^{(n)}$ is equipped with its discrete $\sigma$-algebra $\mathcal{E}^{(n)} = \mathcal{P}(\mathbf{D}^{(n)})$, and $X^{(n)}$ is $\F/\mathcal{E}^{(n)}$-measurable. The values $\T^{(1)}_{n,*}$ uniquely define the push-forward probability measure $\mu^{(n)} = \P \circ (X^{(n)})^{-1}$ on $(\mathbf{D}^{(n)},\mathcal{E}^{(n)})$ for each $n \geq 2$, such that $\mu^{(n)}(\{d^{(n)}_j\}) = \T^{(1)}_{n,j}$. Accordingly, $\mu^{(n)}$ is a near-log-concave probability measure for every $n\geq 5$.
\end{rem}
Following the setup given in Proposition \ref{nearlogconcaverandomvariableprop}, and denoting $\E^{\P}[\cdot]$ as the expectation under the probability measure $\P$, the $m$-th moment of $X^{(n)}$, denoted as $\langle\textbf{M}\rangle_{\mathbf{D}^{(n)}}^{(m)}$, is given by
\begin{align} 
\langle\textbf{M}\rangle_{\mathbf{D}^{(n)}}^{(m)} = \sum_{d^{(n)}_j \in \, \mathbf{D}^{(n)}} (d^{(n)}_j)^m \mu^{(n)}\left( \{d^{(n)}_j\} \right) = \E^{\P}\left[ (X^{(n)})^m \right], \notag  \end{align}
for any $n\geq 2$ and $m\geq 1$. 
\begin{rem}
To study the joint behavior of these random variables under the assumption of mutual independence, one can construct an infinite product space $(\mathbf{D}^*, \mathcal{E}^*, \mu^*)$ given by 
\begin{align}
&\mathbf{D}^* = \prod_{n=2}^{\infty} \mathbf{D}^{(n)} = \mathbf{D}^{(2)} \times \mathbf{D}^{(3)} \times \ldots \notag \\
&\mathcal{E}^* = \bigotimes_{n=2}^{\infty} \mathcal{E}^{(n)} = \mathcal{E}^{(2)} \otimes \mathcal{E}^{(3)} \otimes \ldots \notag \\
&\mu^* = \bigotimes_{n=2}^{\infty} \mu^{(n)} = \mu^{(2)} \otimes \mu^{(3)} \otimes \ldots \notag
\end{align}
and define a new sequence of mutually independent random variables $Y^{(n)}: \mathbf{D}^* \rightarrow \mathbf{D}^{(n)}$ as coordinate projection maps $Y^{(n)}(\omega^*) = \omega^*_n \in \mathbf{D}^{(n)}$ for $\omega^* = (\omega^*_2, \omega^*_3, \dots) \in \mathbf{D}^*$. Thus, 
\begin{align}
\mu^*\left(Y^{(n)} = d^{(n)}_j\right) = \mu^{(n)}\left( \{d^{(n)}_j\} \right) = \T^{(1)}_{n,j} \quad \text{for any \(n \geq 2\)}. \notag
\end{align}
\end{rem}
We can also study the Shannon entropy of each \emph{near}-log-concave probability distribution from $\T^{(1)}$. First, we denote the Shannon sum as follows:
\begin{align}
\mathcal{S}^{(y,n)} = -\sum_{j=1}^{n+1} \log\left(\T^{(y)}_{n,j}\right)\T^{(y)}_{n,j} \label{shannonsum}
\end{align}
for every $n\geq 2$ and any finite $y\in\R_+\setminus\{0\}$. We can take (\ref{shannonsum}) as a function $h : \R^{n+1}_+ \rightarrow \R$ where $\mathcal{S}^{(y,n)} = h(\T^{(y)}_{n,*})$ for every $n\geq 2$ and any finite $y\in\R_+\setminus\{0\}$. For what follows, we specifically denote
\begin{align}
\mathcal{S}^{(n)} \triangleq \mathcal{S}^{(2,n)}, \notag 
\end{align}
since $\T = \T^{(2)}$. We are now in position to state the following result.
\begin{lem}
\label{generalshannonsumresult}
Let $\mathcal{S}^{(y,n)}$ be as in (\ref{shannonsum}). Then,
\begin{align}
\mathcal{S}^{(y,n)} =\left(\frac{y}{2}\right)^n\mathcal{S}^{(n)}  + y^n\left(\log(2) - \log(y)\right)n, \notag
\end{align}
for every $n\geq 2$ and any finite $y\in\R_+\setminus\{0\}$.
\end{lem}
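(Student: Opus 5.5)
This is a direct substitution computation. I will use the definition (\ref{scaledarray}), $\T^{(y)}_{n,j} = (y/2)^n \T_{n,j}$, together with Proposition \ref{summationpowertwo}, which gives $||\T_{n,*}|| = 2^n$.

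First, I substitute the scaling into the Shannon sum (\ref{shannonsum}) and split the logarithm of the product as $\log(\T^{(y)}_{n,j}) = n\log(y/2) + \log(\T_{n,j})$. This gives
\begin{align}
\mathcal{S}^{(y,n)} = -\left(\frac{y}{2}\right)^n \sum_{j=1}^{n+1} \T_{n,j}\log(\T_{n,j}) - n\log\left(\frac{y}{2}\right)\left(\frac{y}{2}\right)^n \sum_{j=1}^{n+1}\T_{n,j}. \notag
\end{align}

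Second, I identify the two pieces. The first sum is exactly $\mathcal{S}^{(n)} = \mathcal{S}^{(2,n)}$, because $\T^{(2)} = \T$. In the second sum, Proposition \ref{summationpowertwo} replaces $\sum_j \T_{n,j}$ by $2^n$, so $(y/2)^n 2^n = y^n$. Writing $-\log(y/2) = \log(2) - \log(y)$ then yields the stated identity
\begin{align}
\mathcal{S}^{(y,n)} = \left(\frac{y}{2}\right)^n\mathcal{S}^{(n)} + y^n\left(\log(2) - \log(y)\right)n. \notag
\end{align}

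No step is a genuine obstacle. The only points needing care are that the logarithm is well defined, since every $\T_{n,j} > 0$ and $y > 0$, and that the row-sum identity holds for all $n \geq 2$, which Proposition \ref{summationpowertwo} guarantees. As a sanity check, setting $y = 2$ makes the second term vanish, recovering $\mathcal{S}^{(2,n)} = \mathcal{S}^{(n)}$.
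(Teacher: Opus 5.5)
Your proposal is correct and follows essentially the same route as the paper: substitute $\T^{(y)}_{n,j} = (y/2)^n \T_{n,j}$, split the logarithm, identify the first sum as $\mathcal{S}^{(2,n)} = \mathcal{S}^{(n)}$, and use Proposition \ref{summationpowertwo} to replace the row sum by $2^n$. Your remark that positivity of every $\T_{n,j}$ and $y>0$ is needed for the logarithm to be defined is a small point the paper leaves implicit.
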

\begin{proof}
Using (\ref{scaledarray}) and (\ref{shannonsum}), we have the following:
\begin{align}
\mathcal{S}^{(y,n)} &= - \sum_{j=1}^{n+1} \left( \frac{y}{2}\right)^n \T_{n,j}\left(\log\left(\T_{n,j}\right) + n\log\left(\frac{y}{2}\right) \right) \notag \\ 
&= -\left(\frac{y}{2}\right)^n \sum_{j=1}^{n+1} \T_{n,j}\log\left(\T_{n,j}\right) - \left( \frac{y}{2}\right)^nn\log\left(\frac{y}{2}\right) \sum_{j=1}^{n+1} \T_{n,j} \notag \\
&= \left(\frac{y}{2}\right)^n \mathcal{S}^{(2,n)} - \left( \frac{y}{2}\right)^nn\log\left(\frac{y}{2}\right)2^n \label{shannonsimplificationline} \\
&= \left(\frac{y}{2}\right)^n \mathcal{S}^{(n)} - y^n n\log\left(\frac{y}{2}\right), \notag
\end{align}
for every $n\geq 1$ and any finite $y\in\R_+\setminus\{0\}$, where (\ref{shannonsimplificationline}) follows from Proposition \ref{summationpowertwo}.
\end{proof}
\begin{lem}
\label{shannonentropyspecificres}
The Shannon entropy is given by
\begin{align}
\mathcal{S}^{(1,n)} = 2^{-n}\mathcal{S}^{(n)}  + n\log(2), \label{shannonexpressionformula}
\end{align}
for every $n\geq 2$. Hence, $-2^nn\log(2) \leq \mathcal{S}^{(n)} < 0$ for every $n\geq 2$.
\end{lem}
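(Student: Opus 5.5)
The formula (\ref{shannonexpressionformula}) is the special case $y=1$ of Lemma \ref{generalshannonsumresult}. Substituting $y=1$ gives $(y/2)^n = 2^{-n}$, $y^n = 1$ and $\log(y)=0$, so
\begin{align}
\mathcal{S}^{(1,n)} = 2^{-n}\mathcal{S}^{(n)} + n\log(2), \notag
\end{align}
for every $n\geq 2$. I would also note that $\mathcal{S}^{(1,n)}$ is the Shannon entropy of $X^{(n)}$. By Corollary \ref{scaledarrayprogressopn} with $y=1$, $\T^{(1)}_{n,*}$ is a strictly-positive probability mass function on $n+1$ points, so $\mathcal{S}^{(1,n)}=-\sum_j \T^{(1)}_{n,j}\log \T^{(1)}_{n,j}$ is exactly the entropy.

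For the bounds on $\mathcal{S}^{(n)}$, I would first get the upper bound directly from its definition. We have $\mathcal{S}^{(n)} = -\sum_{j=1}^{n+1}\T_{n,j}\log(\T_{n,j})$, and every $\T_{n,j}$ is a positive integer, so each term $\T_{n,j}\log\T_{n,j}\geq 0$. For $n\geq 2$ the diagonal entry satisfies $\T_{n,n}\geq 2$, since $\T_{2,2}=2$ and the diagonal is nondecreasing by (\ref{remakeofdiagonals}) and the explicit values. That term is therefore strictly positive, and hence $\mathcal{S}^{(n)}<0$.

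For the lower bound I would use the elementary entropy bound $\mathcal{S}^{(1,n)}\geq 0$. It holds because each probability $\T^{(1)}_{n,j}\in(0,1]$, so each term $-p\log p\geq 0$. Substituting into the formula gives $2^{-n}\mathcal{S}^{(n)} + n\log 2 \geq 0$, that is, $\mathcal{S}^{(n)}\geq -2^n n\log(2)$. This is the claimed lower bound.

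No step is a real obstacle. The only point needing care is the strictness of the upper inequality, which requires exhibiting one entry of $\T_{n,*}$ exceeding $1$ for each $n\geq 2$, and the diagonal $\T_{n,n}$ supplies it. As a sanity check, the upper bound $\mathcal{S}^{(1,n)}\leq \log(n+1)$ (entropy is maximised by the uniform law) is consistent with $\mathcal{S}^{(n)}<0$ but gives nothing new, so I would not use it.
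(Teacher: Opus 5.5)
Your proposal is correct and takes essentially the paper's route: the identity is Lemma~\ref{generalshannonsumresult} at $y=1$, the lower bound comes from $\mathcal{S}^{(1,n)}\geq 0$, and the upper bound comes from $\T_{n,j}\geq 1$ making each $-\T_{n,j}\log\T_{n,j}\leq 0$. The one difference is that you justify the strict inequality $\mathcal{S}^{(n)}<0$ via $\T_{n,n}\geq 2$, which the paper only asserts after obtaining $\mathcal{S}^{(n)}\leq 0$; this is also exactly where $n\geq 2$ matters, since for $\T_{1,*}=[1,1]$ the corresponding sum would vanish.
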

\begin{proof}
Since $\mathcal{S}^{(1,n)} = -\sum_{j=1}^{n+1} \log(\T^{(1)}_{n,j})\T^{(1)}_{n,j}$, each $\mathcal{S}^{(1,n)}$ is Shannon entropy for $n\geq 2$, and (\ref{shannonexpressionformula}) follows from Lemma \ref{generalshannonsumresult}. For the boundary condition statement, since Shannon entropy $\mathcal{S}^{(1,n)}$ satisfies $ 0 \leq \mathcal{S}^{(1,n)} \leq \log(n+1)$, we have
\begin{align}
-2^nn\log(2) \leq \mathcal{S}^{(n)} \leq 2^n \left(\log(n+1) - n\log(2)\right) \label{initialboundary}
\end{align}
for every $n\geq 2$. The upper boundary in (\ref{initialboundary}) can be further tightened by observing that $\inf(\T) = 1$ and that $x \mapsto - x\log(x) \leq 0$ for any $x\geq 1$. Hence, 
\begin{align}
-2^nn\log(2) \leq \mathcal{S}^{(n)} \leq 0, \notag 
\end{align}
for every $n\geq 1$. Thus, we must have
\begin{align}
-2^nn\log(2) \leq \mathcal{S}^{(n)} < 0, \notag
\end{align}
for every $n \geq 2$, which gives the boundary statement.
\end{proof}
We are now in position to state the following result.
\begin{prop}
\label{shannonpropallsums}
Let $\mathcal{S}^{(y,n)}$ be as in (\ref{shannonsum}). Then,
\begin{align}
\mathcal{S}^{(y,n)} =y^n\left( \mathcal{S}^{(1,n)} - n\log(y) \right), \notag
\end{align}
for every $n\geq 2$ and any finite $y\in\R_+\setminus\{0\}$.
\end{prop}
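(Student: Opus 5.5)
The identity follows by combining Lemma \ref{generalshannonsumresult} with Lemma \ref{shannonentropyspecificres}, eliminating the unnormalised Shannon sum $\mathcal{S}^{(n)}$ between them. Fix $n\geq 2$ and a finite $y\in\R_+\setminus\{0\}$.

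First, solve (\ref{shannonexpressionformula}) for the anchor sum. This gives
\begin{align}
\mathcal{S}^{(n)} = 2^n\left(\mathcal{S}^{(1,n)} - n\log(2)\right). \notag
\end{align}
Next, substitute this expression into Lemma \ref{generalshannonsumresult}. The prefactor $(y/2)^n$ multiplies $2^n$ to give $y^n$, so
\begin{align}
\mathcal{S}^{(y,n)} = y^n\mathcal{S}^{(1,n)} - y^n n\log(2) + y^n n\log(2) - y^n n\log(y). \notag
\end{align}
The two $\log(2)$ terms cancel, which leaves $y^n\left(\mathcal{S}^{(1,n)} - n\log(y)\right)$, as claimed.

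As a sanity check, one can also argue directly from (\ref{scaledarray}). Writing $\T^{(y)}_{n,j} = y^n\T^{(1)}_{n,j}$ and using $\sum_j \T^{(1)}_{n,j} = 1$ from Corollary \ref{scaledarrayprogressopn} gives the same identity in one line, by splitting the logarithm exactly as in the proof of Lemma \ref{generalshannonsumresult}.

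There is no real obstacle here. The only point needing care is sign bookkeeping: Lemma \ref{generalshannonsumresult} is written with $+\,y^n(\log 2-\log y)n$, and this has to be matched correctly against the $-\,n\log 2$ coming from Lemma \ref{shannonentropyspecificres} so that the $\log(2)$ terms cancel.
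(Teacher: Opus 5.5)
Your proposal is correct and follows the paper's own route: the paper proves the proposition by combining Lemma \ref{generalshannonsumresult} with Lemma \ref{shannonentropyspecificres}, which is exactly your elimination of $\mathcal{S}^{(n)}$. Your algebra checks out, including the cancellation of the $\log(2)$ terms, and so does your direct one-line check via $\T^{(y)}_{n,j} = y^n\T^{(1)}_{n,j}$.
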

\begin{proof}
The statement follows from Lemma \ref{generalshannonsumresult} and Lemma \ref{shannonentropyspecificres}.
\end{proof}
Using Proposition \ref{shannonpropallsums}, one can express every Shannon sum over $\T^{(y)}$ in terms of the Shannon entropy over $\T^{(1)}$ for any finite $y\in\R_+\setminus\{0\}$, which we shall later study numerically. Finally, we present a way to map $\T^{(1)}$ onto infinitely many right-stochastic matrices, each satisfying the three state-transition characteristics mentioned in the beginning of this paper -- the reader may prefer to review these characteristics for what follows.
We define
\begin{align}
n^* = \min\{n \geq 1 \, : \, \T^{(1)}_{n,n} - \T^{(1)}_{n,1}(n+2) \geq 0\}. \label{smallestnsatisfy}
\end{align}
For the next statement, let $\mathcal{M}^{\infty}$ be the space of all infinite-dimensional right-stochastic matrices that satisfy the aforementioned three state-transition  characteristics. In addition, let 
\begin{align}
\mathcal{Q} = \prod_{k\in\N_+} [0,1] \notag 
\end{align}
be the Tychonoff cube. We are now in position to state the following result.
\begin{prop}
\label{propstochastictransformation}
Let $n^*$ be defined as in (\ref{smallestnsatisfy}) for every $n\geq 1$. For any $\mathbf{\lambda} = [\lambda_1,\lambda_2,...] \in \mathcal{Q}$, define $\T^{(1)}(\mathbf{\lambda})$ as follows:
\begin{equation}
\label{infinitemanystochasticconstruct}
\T^{(1)}_{n,j}(\lambda_n) =
\begin{cases}
\T^{(1)}_{n,j} & n < n^* \, ,\, 1 \leq j \leq n+1 \\ 
\notag\\
\T^{(1)}_{n,j} - \lambda_n\T^{(1)}_{n,1} & n \geq n^* \, ,\, 1 \leq j \leq n  \\
\notag \\
\T^{(1)}_{n,j} + n\lambda_n\T^{(1)}_{n,1} & n \geq n^* \, ,\, j = n+1  \\
\end{cases}
\end{equation}
for every $n\geq 0$ and $1\leq j \leq n+1$. Then $\T^{(1)}(\mathbf{\lambda}) \in \mathcal{M}^{\infty}$ for any $\lambda\in\mathcal{Q}$.
\end{prop}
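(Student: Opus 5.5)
The plan is to verify, row by row, the defining properties of $\mathcal{M}^{\infty}$. We need each row $\T^{(1)}_{n,*}(\lambda_n)$ to be a probability vector supported on $\{1,\ldots,n+1\}$, non-decreasing on $\{1,\ldots,n\}$, and with its maximum at column $n$ (in particular $\T^{(1)}_{n,n}(\lambda_n)\geq \T^{(1)}_{n,n+1}(\lambda_n)$). As noted after (\ref{unimodality}) and in the discussion of the general construct, unimodality with mode at $k^*=n$ gives the first two state-transition characteristics. The lower-triangular form, with zeros above column $n+1$, gives the third. Throughout I will use that $\T_{n,1}=\T_{n,n+1}=1$, so $\T^{(1)}_{n,1}=\T^{(1)}_{n,n+1}=2^{-n}$, together with the explicit form (\ref{algorithmentire}) and the normalisation from Proposition \ref{summationpowertwo}.

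First I compute $n^*$ from (\ref{smallestnsatisfy}). Multiplying by $2^n$, the condition reads $\T_{n,n}\geq n+2$. The diagonal is $1,2,3,5,10,\ldots$ for $n=1,\ldots,5$, so the condition fails for $n\leq 4$ and holds first at $n=5$; hence $n^*=5$. For $n\geq 4$ we have $\T_{n,n}=5\cdot 2^{n-4}$, either by (\ref{algorithmentire}) or by (\ref{remakeofdiagonals}). An induction then shows $5\cdot 2^{n-4}\geq n+2$ for every $n\geq 5$: the left side doubles while the right side increases by $1$. This step matters because the construction modifies every row $n\geq n^*$, not only row $n^*$, so the inequality defining $n^*$ must persist for all later rows.

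For rows $n<n^*$, that is $n\leq 4$, nothing changes. The rows $[1]$, $[1,1]$, $[1,2,1]$, $[1,3,3,1]$, $[1,4,5,5,1]$, divided by $2^n$, are visibly stochastic. They are non-decreasing up to column $n$, with the maximum attained at column $n$ (ties are allowed, as in (\ref{unimodality})).

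For rows $n\geq n^*$ I will check four things.

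\emph{Row sum.} Subtracting $\lambda_n 2^{-n}$ from $n$ entries and adding $n\lambda_n 2^{-n}$ to the last entry leaves the total equal to $1$, by Proposition \ref{summationpowertwo}.

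\emph{Nonnegativity.} For $j\leq n$ we have $\T_{n,j}\geq 1=\T_{n,1}$, so $\T^{(1)}_{n,j}-\lambda_n\T^{(1)}_{n,1}\geq (1-\lambda_n)2^{-n}\geq 0$ because $\lambda_n\in[0,1]$. The last entry only increases.

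\emph{Monotonicity.} On columns $1,\ldots,n$ every entry is shifted by the same constant, so the non-decreasing order inherited from Corollary \ref{unimodalitycorollaryres} and Proposition \ref{propnearlogconcave} is preserved. This gives $\sup$ over columns $1,\ldots,n$ attained at $j=n$.

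\emph{Mode versus the new state.} We need $\T^{(1)}_{n,n}-\lambda_n 2^{-n}\geq 2^{-n}+n\lambda_n 2^{-n}$, which is equivalent to $\T_{n,n}\geq 1+(n+1)\lambda_n$. The worst case is $\lambda_n=1$, where the condition becomes $\T_{n,n}\geq n+2$, exactly what was established above for all $n\geq n^*$.

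The main obstacle I expect is not the algebra but pinning down the characteristics precisely enough to match this mode condition. "Highest probability to remain" must be read as a non-strict maximum, since row $1$ has a tie and strictness would fail there. One must also confirm that monotonicity of the unmodified rows holds on columns $1$ through $n$ for every $n\geq 5$. That follows from the column-wise form (\ref{algorithmentire}): the first columns are $1$, $n$, $2n-3$, and then $(5+3(n-j))2^{j-4}$, which increases in $j$ for $j\leq n$.
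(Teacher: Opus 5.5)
Your proposal is correct and follows the same route as the paper's proof. The row sums are preserved, the uniform shift by $\lambda_n\T^{(1)}_{n,1}$ keeps columns $1,\ldots,n$ non-decreasing, and the mode condition reduces, in the worst case $\lambda_n=1$, to the inequality $\T_{n,n}\geq (n+2)\T_{n,1}$ that defines $n^*$. Your explicit checks that $5\cdot 2^{n-4}\geq n+2$ holds for every $n\geq n^*=5$, and that the shifted entries stay nonnegative, fill in steps the paper leaves implicit, since minimality of $n^*$ alone gives the inequality only at $n=n^*$.
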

\begin{proof}
We have $\sup(\T^{(1)}_{n,*})=\T^{(1)}_{n,n}$ for every $n\geq 1$. Also, $[\T^{(1)}_{n,n},\ldots,\T^{(1)}_{n,1}]$ is monotonically non-increasing for every $n\geq 1$ and $\T^{(1)}_{n,n+k} = 0$ for any $k\geq 2$ and every $n\geq 1$. Hence, $\T^{(1)}\in\mathcal{M}^{\infty}$. Next, (\ref{smallestnsatisfy}) can be written as follows:
\begin{align}
n^* =  \min\{n \geq 1 \, : \, (\T^{(1)}_{n,n} - \T^{(1)}_{n,1}) - (\T^{(1)}_{n,n+1} + n\T^{(1)}_{n,1}) \geq 0 \}, \label{smallestnsatisfyexpanded}
\end{align}
since we have $\T^{(1)}_{n,1} = \T^{(1)}_{n,n+1}$ for every $n\geq 1$. Therefore, having $\sup(\T^{(1)}_{n,*})=\T^{(1)}_{n,n}$, we must have the following relation:
\begin{align}
\T^{(1)}_{n,n} - \lambda_n\T^{(1)}_{n,1} \geq \T^{(1)}_{n,n+1} + n\lambda_n\T^{(1)}_{n,1} \hspace{0.1in} \text{for $n\geq n^*$} \label{supremumrelationstatetonext}
\end{align}
for any $\lambda_n\in[0,1]$. Moreover, since $\lambda_n\geq 0$, $[\T^{(1)}_{n,n} - \lambda_n\T^{(1)}_{n,1},\ldots,\T^{(1)}_{n,1} - \lambda_n\T^{(1)}_{n,1}]$ is monotonically non-increasing for every $n\geq n^*$. Therefore, using (\ref{supremumrelationstatetonext}), we also have 
\begin{align}
\sup(\T^{(1)}_{n,*}(\lambda_n))=\T^{(1)}_{n,n}(\lambda_n), \notag 
\end{align}
for any $\lambda_n\in[0,1]$ for every $n\geq n^*$. From (\ref{infinitemanystochasticconstruct}) we see $\T^{(1)}_{n,n+k}(\lambda_n) = 0$ for any $k\geq 2$ and every $n\geq 1$. Finally, we have the following:
\begin{align}
\sum_{j=1}^n (\T^{(1)}_{n,j} - \lambda_n\T^{(1)}_{n,1} ) &= 1 - (\T^{(1)}_{n,n+1} + n\lambda_n\T^{(1)}_{n,1}) \, \Rightarrow \, || \T^{(1)}_{n,*}(\lambda_n) || = 1, \notag
\end{align}
for any $\lambda_n\in[0,1]$ and every $n\geq n^*$. Thus, $\T^{(1)}(\mathbf{\lambda}) \in \mathcal{M}^{\infty}$ for any $\lambda\in\mathcal{Q}$.
\end{proof}
\begin{rem}
For $\mathbb{\lambda} = [0,0,...] = \mathbf{0}$ being all-zeros, we have $\T^{(1)}(\mathbf{0}) = \T^{(1)}$.
\end{rem}
From Proposition \ref{propstochastictransformation}, since $\T^{(1)}(\mathbf{\lambda}) \in \mathcal{M}^{\infty}$ for any $\lambda\in\mathcal{Q}$, we have uncountably many right-stochastic matrices satisfying the three state-transition characteristics, forming an infinite family of candidates to model random systems with such behaviour. This is certainly only one such transformation of $\T^{(1)}$ and many others can be proposed.
\begin{rem}
The solution to (\ref{smallestnsatisfy}) is $n^* = 5$.
\end{rem}

\section{Numerical Study}
We shall motivate that further properties not included or studied in this paper reside within the structure of $\T$ -- and accordingly $\T^{(y)}$ in that spirit for any finite $y\in\R_+\setminus\{0\}$ -- which can be worked out analytically and/or numerically. 
For what follows, we shall focus our attention on $\T^{(1)}$ due to its probabilistic nature, which forms the initial motivation of this paper. First, in correspondence to the three main state-transition characteristics mentioned in the Introduction, we collate three properties of $\T^{(1)}$ below, which are already shown in the previous section.

\begin{rem}
\label{threemaincharacteristics}
The following properties hold for the matrix form of $\T^{(1)} \in \mathcal{M}^{\infty}$:
\begin{enumerate}
\item $\sup(\T^{(1)}_{n,*})=\T^{(1)}_{n,n}$ for every $n\geq 1$
\item $[\T^{(1)}_{n,n},\ldots,\T^{(1)}_{n,1}]$ is monotonically non-increasing for every $n\geq 1$
\item $\T^{(1)}_{n,n+k} = 0$ for any $k\geq 2$ and every $n\geq 1$
\end{enumerate}
\end{rem}
Next, in order to aid the reader with the anchor integer progression towards making it visually available, we shall also provide the derived values of the first few rows and columns of the triangular array $\T$, given below. 
\\
\begin{figure}[htbp] 
  \centering
  \includegraphics[width=0.67\linewidth]{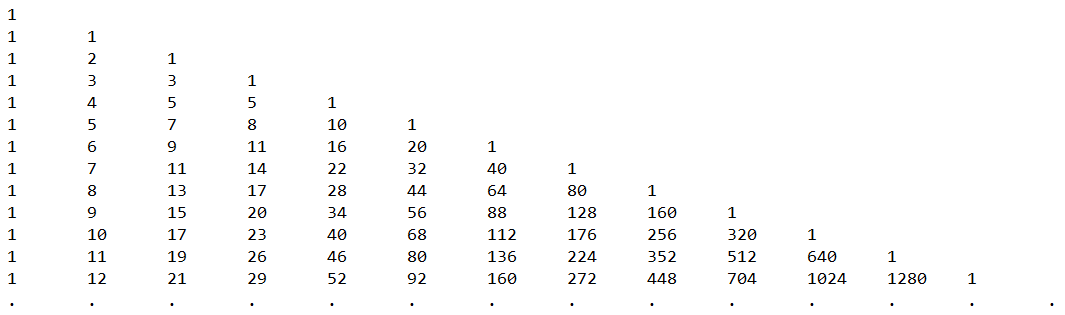}
  \caption{Terms of the triangular array $\T$}
  \label{figure1}
\end{figure}
\\
The following charts demonstrate (i) the probability mass functions as one progresses from one row to the next over the normalised array $\T^{(1)}$, (ii) the behaviour as one progresses from one finite column to the next over $\T^{(1)}$. 
\\
\begin{figure}[!ht]
  \centering
  \includegraphics[width=0.45\linewidth]{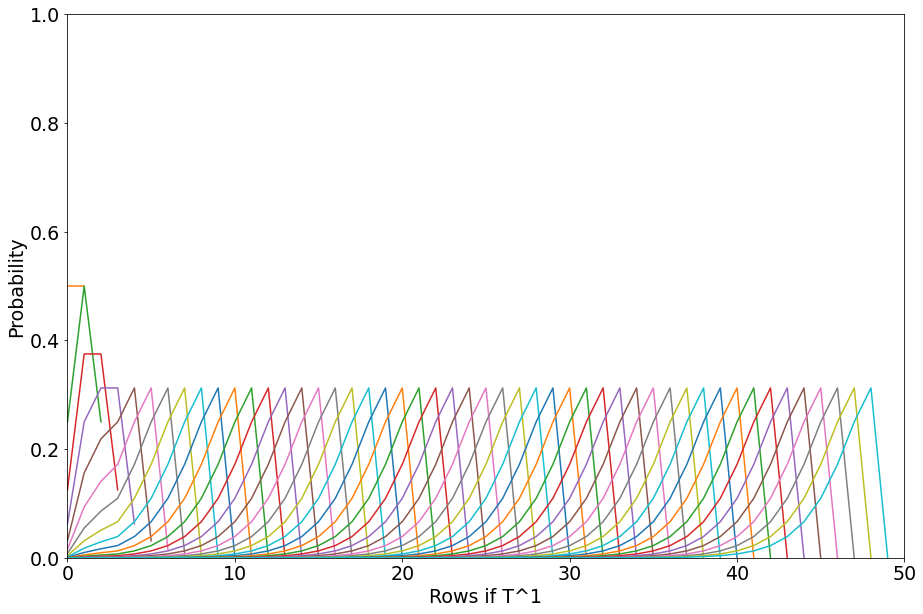}%
  \qquad
  \includegraphics[width=0.45\linewidth]{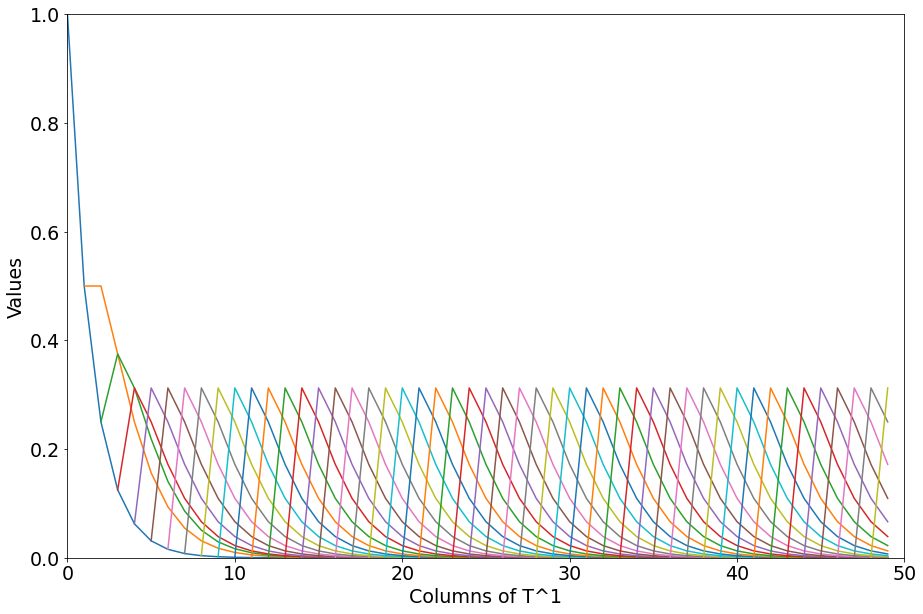}
  \caption{Left: First 50 probability mass functions from $\T^{(1)}$. Right: First 50 columns of $\T^{(1)}$}
  \label{figure2}
\end{figure}
\\
Here, we point to Remark \ref{threemaincharacteristics} for patterns that can be observed from the left-panel of the chart. In addition, one can observe a form of symmetry across the rows and columns of $\T^{(1)}$ from Figure 2; the rows (i.e. probability mass functions) are left-skewed and the columns are right-skewed. 

A central background interest in our study is long-term behavior of statistical moments. Although expanding arrays may exhibit unbounded dispersion as the underlying state-space grows (e.g. symmetric binomial distribution induced by Pascal's triangle), the construct presented in this paper demonstrates a structural decoupling between a divergent mean and a convergent variance. More specifically, if the state-space of an associated near-log-concave random variable $X^{(n)} : \Omega \rightarrow \mathbf{D}^{(n,*)}$ for any $n\geq 2$ is given as the set of integers:
\begin{align}
\mathbf{D}^{(n,*)} \triangleq \{1,\ldots,n+1\}, \notag
\end{align}
then the $m$-th moment of $X^{(n)}$ is given by the folllowing sum:
\begin{align}
\langle\textbf{M}\rangle_{\mathbf{D}^{(n,*)}}^{(m)} = \sum_{j=1}^{n+1} j^m\mu^{(n)}(\{j\}), \label{momentforspecific}
\end{align}
for any $n\geq 2$ and $m\geq 1$. Now, using (\ref{algorithmentire}), Proposition (\ref{summationpowertwo}) and Proposition \ref{nearlogconcaverandomvariableprop}, we have
\begin{align}
\P(X^{(n)} = j) = 2^{-n}\T_{n,j} = \frac{5 + 3(n-j)}{2^{n-j+4}} = \mu^{(n)}(\{j\}), \label{reducedprobexp}
\end{align}
for $4\leq j \leq n$. From (\ref{momentforspecific}), we therefore have the first and second moments:
\begin{align}
&\langle\textbf{M}\rangle_{\mathbf{D}^{(n,*)}}^{(1)} = n - \frac{7}{4} - \frac{3n + 1}{2^n}, \label{firstmomentcalc} \\
&\langle\textbf{M}\rangle_{\mathbf{D}^{(n,*)}}^{(2)} = n^2 - \frac{7n}{2} + \frac{27}{4} - \frac{n^2 + 6n -13}{2^n}. \label{secondmomentcalc}
\end{align}
Therefore, using (\ref{firstmomentcalc}) and (\ref{secondmomentcalc}), the variance satisfies
\begin{align}
\text{Var}^{\P}\left[X^{(n)}\right] = \E^{\P}\left[(X^{(n)})^2\right] - \E^{\P}\left[(X^{(n)})\right]^2 = \frac{59}{16} - \frac{10n^2 - 29n + 19}{2^{n+1}} - \frac{9n^2 + 6n + 1}{4^n}. \label{variancestatic}
\end{align}
This suggests that the variance of $X^{(n)}$ converges to a constant as $n\rightarrow\infty$. From (\ref{variancestatic}), we can state the following.
\begin{rem}
Let $X^{(n,q)} : \Omega \rightarrow \mathbf{D}^{(n,q)}$ be a near-log-concave random variable with $\mathbf{D}^{(n,q)} = \{q,\ldots,n+q\}$ for any $q\in\N_+$, so that 
\begin{align}
\langle\textbf{M}\rangle_{\mathbf{D}^{(n,q)}}^{(m)} = \sum_{j=1}^{n+1} (j + q -1)^m\mu^{(n)}(\{j + q - 1\}), \notag
\end{align}
for any $n\geq 2$ and $m\in\{1,2\}$. Then, $\text{Var}^{\P}\left[X^{(n,q)}\right]  \rightarrow 59/16$ as $n \rightarrow \infty$, for any $q\in\N_+$.
\end{rem}
This work essentially shows how a simple combinatorial rule can push variance towards a steady-state limit even as the underlying support grows indefinitely. We shall expand the asymptotic analysis further by defining a transformed random variable:
\begin{align}
Z^{(n)} = n - X^{(n)}, \notag
\end{align}
for $n\geq 4$, which renders the support of $Z^{(n)}$ to be $\{-1,\ldots,n-1\}$. Let $z=n-j$, so that the condition $4\leq j \leq n$ maps to $0\leq z \leq n-4$. Thus, using (\ref{reducedprobexp}), we have
\begin{align}
\P(Z^{(n)} = z) = \P(X^{(n)} = n - z) = \frac{5 + 3z}{2^{z+4}} \,\,\,\,\, \text{for $0\leq z \leq n-4$}. \label{reducedprobexpshifted}
\end{align}
When $z=-1$, we have $j=n+1$, and therefore, using $\T^{(1)}_{n,n+1} = 2^{-n}$, we get
\begin{align}
\lim_{n\rightarrow\infty}\P(Z^{(n)} = -1) = \lim_{n\rightarrow\infty}\frac{1}{2^n} = 0. \notag
\end{align}
In addition, since the upper boundary $n-4$ in (\ref{reducedprobexpshifted}) dominates any fixed $z$ in the limit as $n\rightarrow\infty$, we have the following:
\begin{align}
\lim_{n\rightarrow\infty}\P(Z^{(n)} = z) = \frac{5 + 3z}{2^{z+4}} \,\,\,\,\, \text{for $z\geq 0$}. \label{keylimitmassfunction}
\end{align}
Note that (\ref{keylimitmassfunction}) is non-negative for every $z\geq 0$ and
\begin{align}
\sum_{z=0}^{\infty} \frac{5 + 3z}{2^{z+4}} = \frac{1}{16}\left(5 \sum_{z=0}^{\infty} \frac{1}{2^z} + 3\sum_{z=0}^{\infty} \frac{z}{2^z} \right) = 1, \label{keylimitmassfunctionsumidentity}
\end{align}
which follows using standard geometric series identities given by
\begin{align}
\sum_{z=0}^{\infty} \frac{1}{2^z} = 2 \,\,\,\,\, \text{and} \,\,\,\,\, \sum_{z=0}^{\infty} \frac{z}{2^z} = 2. \notag
\end{align}
Thus, there exists a random variable $Z$ with infinite support $\N_0 \cup \{-1\}$ having zero mass at $-1$, such that $Z^{(n)}$ converges in distribution
\begin{align}
Z^{(n)} \xrightarrow{\text{d}} Z \,\,\,\, \text{as $n\rightarrow\infty$}, \notag
\end{align}
with its probability mass function given by
\begin{align}
\P(Z = z) = \frac{5 + 3z}{2^{z+4}} \,\,\,\,\, \text{for $z\geq 0$}, \label{limitrandomvariable}
\end{align}
from (\ref{keylimitmassfunction}). Hence, in relation to (\ref{firstmomentcalc}), (\ref{secondmomentcalc}) and (\ref{variancestatic}), we get
\begin{align}
\E^{\P}[Z] = \frac{7}{4}, \,\,\,\,\, \E^{\P}[Z^2] = \frac{27}{4}, \,\,\,\,\, \text{Var}^{\P}\left[Z\right] = \frac{59}{16}, \notag
\end{align}
which shows that the probability mass function of $Z^{(n)}$ converges to a stable, non-dispersive limiting distribution. One way to interpret this observation is how the sequence of probability mass functions from $\T^{(1)}$ progressively mirrors a \emph{wave} that organizes itself as a self-reinforcing packet inside a discrete channel as the mean advances (i.e. slides) with increasing $n$. Thus, as the variance converges while the relative peak profile remains structurally pinned, the distribution behaves as a non-dispersive wave pulse propagating forward through its corridor without flattening out, but instead, dynamically converging to a stable shape in the form of a limiting probability mass function. Accordingly, $\T^{(1)}$ establishes a balance reminiscent of soliton dynamics in physics.

Next, we present the evolution of the Shannon entropy $\mathcal{S}^{(1,n)}$ over the rows of $\T^{(1)}$, and the Shannon sums $\mathcal{S}^{(y,n)}$ for some samples of $y\in[0.5,1]$, leading up to $\mathcal{S}^{(1,n)}$. The numerical trajectories in Figure 3 demonstrate that the Shannon entropy evolution over $\T^{(1)}$ is (i) monotonically non-decreasing and (ii) convergent.
\\
\begin{figure}[!ht]
  \centering
  \includegraphics[width=0.45\linewidth]{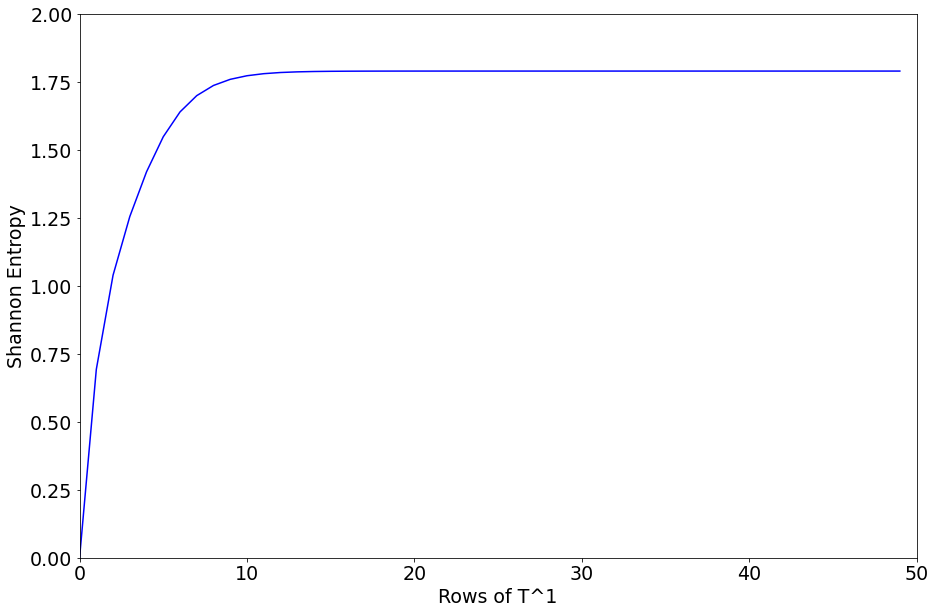}%
  \qquad
  \includegraphics[width=0.45\linewidth]{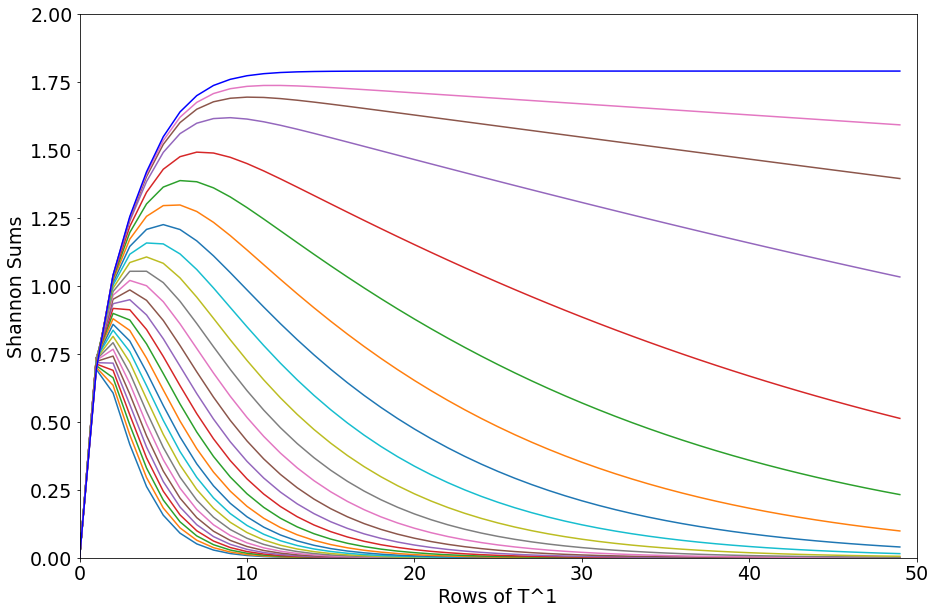}
  \caption{Left: Shannon entropy $\mathcal{S}^{(1,n)}$. Right: Shannon sums $\mathcal{S}^{(y,n)}$ for samples $y\in[0.5,1]$}
  \label{figure3}
\end{figure}
\\
Regarding convergence of Shannon entropy as $n\rightarrow\infty$, we can use the limit random variable $Z$ and its probability mass function (\ref{limitrandomvariable}), which provide us with the following:
\begin{align}
\lim_{n\rightarrow\infty} \mathcal{S}^{(1,n)} = -\sum_{z=0}^{\infty} \log\left( \frac{5 + 3z}{2^{z+4}} \right)\left(\frac{5 + 3z}{2^{z+4}}\right) \approx 1.7907694690801 \ldots\notag
\end{align}
Moreover, numerical evaluations indicate that for any positive $y\leq 1$, Shannon entropy $\mathcal{S}^{(1,n)}$ is the only monotonically non-decreasing progression when compared to $\mathcal{S}^{(y,n)}$ with $y < 1$.

\begin{rem}
The limiting profile obtained above suggests a broader family of probability mass functions of the form
\begin{align}
f(z \, ; a,b,c) =\frac{a+bz}{2^{z+c}} \,\,\,\, \text{$z\geq 0$}, \notag
\end{align}
for $a,b,c\in\N_0$ and $c\geq 1$, subject to the normalization condition
\begin{align}
a+b=2^{c-1}. \notag
\end{align}
Note that $f(z \, ; a,b,c) \geq 0$ for every $z\geq 0$ and
\begin{align}
\sum_{z=0}^{\infty} f(z \, ; a,b,c) = 1. \notag
\end{align}
The present construction corresponds to the particular choice $(a,b,c)=(5,3,4)$. It is natural to ask which members of this family arise asymptotically from analogous triangular-array progressions that give rise to discrete non-dispersive propagation. We leave this broader question for future work.
\end{rem}

\section{Conclusion}
We introduced and studied a sequence progression algorithm that produces an infinite triangular array with numerous properties as presented in the main body of this work. Among these properties, the triangular array defines infinitely many degree sequences of multigraphs without loops, produces unimodal near-log-concave sequences, hosts a totally non-negative Toeplitz matrix of order two, and naturally lends itself to a probabilistic setup through what we call near-log-concave random variables and their induced probability measures. We proved several asymptotic properties of the proposed triangular array to understand its convergent features. We showed that the triangular array can be taken as an anchor integer structure to construct infinitely many lower-triangular stochastic matrices satisfying the three aforementioned state-transition characteristics. A natural direction for future work is to determine the conditions under which these state-transition characteristics can support discrete non-dispersive wave propagation patterns across ordered state-spaces. A mathematical synthesis of advancing divergent mean alongside bounded convergent variance (and, more strongly, the existence of a stable probability mass function in the limit) suggests that such combinatorial rules may provide an analytical approach to such investigations. In this spirit, we shall examine systems with such state-transition characteristics from a more holistic viewpoint, whereby the sequence progression presented in this paper forms a promising example. We believe that studying infinite sequence progressions hosting particular characteristics as triangular arrays has the potential to provide a viable path to generate constructs for exploring certain patterns observed in nature.

\end{document}